\newcommand{\A}{\mathbb{A}}
\newcommand{\C}{\mathbb{C}}
\newcommand{\D}{\mathbb{D}}
\newcommand{\disc}{\mathfrak{d}}
\newcommand{\F}{\mathbb{F}}
\newcommand{\g}{\mathfrak{g}}
\renewcommand{\H}{\mathbb{H}}
\renewcommand{\k}{\mathfrak{k}}
\newcommand{\M}{\operatorname{M}}
\newcommand{\cO}{\mathcal{O}}
\newcommand{\p}{\mathfrak{p}}
\newcommand{\R}{\mathbb{R}}
\newcommand{\s}{\mathfrak{s}}
\renewcommand{\u}{\mathfrak{u}}
\newcommand{\V}{\mathcal{V}}
\newcommand{\W}{\mathcal{W}}
\newcommand{\Z}{\mathbb{Z}}
\newcommand{\Hom}{\operatorname{Hom}}
\newcommand{\GL}{\operatorname{GL}}
\newcommand{\Ind}{\operatorname{Ind}}
\newcommand{\Isom}{\operatorname{Isom}}
\newcommand{\ord}{\operatorname{ord}}
\newcommand{\sgn}{\operatorname{sgn}}
\newcommand{\diag}{\operatorname{diag}}
\newcommand{\Sp}{\operatorname{Sp}}
\newcommand{\std}{\operatorname{std}}
\newcommand{\End}{\operatorname{End}}
\newcommand{\Vol}{\operatorname{Vol}}
\newcommand{\tru}{\bigtriangleup}
\newcommand{\trd}{\bigtriangledown}
\newtheorem{df}{Definition}[section]
\newtheorem{thm}[df]{Theorem}
\newtheorem{prop}[df]{Proposition}
\newtheorem{lem}[df]{Lemma}
\newtheorem{rem}[df]{Remark}
\title{On the Local Factors of Irreducible Representations of Quaternionic Unitary Groups}
\author{Hirotaka KAKUHAMA}
\email{hkaku@math.kyoto-u.ac.jp}
\address{Department of Mathematics, Kyoto University, Kitashirakawa Oiwake-cho, Sakyo-ku, Kyoto 606-8502, Japan}
\date{}
\begin{document}

\maketitle

\begin{abstract}
In this paper, we give a precise definition of the analytic $\gamma$-factor of irreducible representations of quaternionic unitary groups, which extends a work of Lapid-Rallis.  
%Moreover, as an application of the $\gamma$-factor, we extend the explicit formula of the zeta integral of some spherical representations to quaternionic unitary groups that are not necessarily unramified.
\end{abstract}

\setcounter{tocdepth}{1}
\tableofcontents

%%%%%%%%%%%%%%%%%%%%%%%%%%%%%%%%%%%%  text   %%%%%%%%%%%%%%%%%%%%%%%%%%%%%%%%%%

%%%%%%%%%%%%%%%%%%%%%%%%%%%%%%%%%  introduction  %%%%%%%%%%%%%%%%%%%%%%%%%%%%%%%

\section{
            Introduction
            }

%%%%% circumstance %%%%%%

The doubling method of Piatetski-Shapiro and Rallis \cite{GPSR87, PSR86} is a theory of integral representation of standard $L$-functions.
Lapid-Rallis \cite{LR05} elaborated on the doubling method, and gave a definition of the analytic $\gamma$-factor of irreducible representations of general linear groups, orthogonal groups, symplectic groups and unitary groups. This was extended by Gan \cite{Gan12} to metaplectic groups.
Moreover, Yamana \cite{Yam14} established their analytic properties and verified basic properties for classical groups containing isometry groups of hermitian or skew-hermitian forms over quaternion algebras. 
However, his work was not enough to characterize the $\gamma$-factor.
The purpose of this paper is to give a precise definition of the analytic $\gamma$-factor and to characterize it in this case.

%%%%% In this paper... %%%%%

Now, we explain our result in more detail. 
Let $F$ be a local field of characteristic zero and let $D$ be a quaternion algebra over $F$.
We consider an $\epsilon$-hermitian space over $D$ (see \S\ref{hs}). Then the \emph{quaternionic unitary group} is defined as the isometry group of the $\epsilon$-hermitian space.
Let $G$ be either a general linear group $\GL_n(D)$ or a quaternionic unitary group, let $\pi$ be an irreducible representation of $G$, let $\omega$ be a character of $F^\times$, let $\psi$ be a non-trivial additive character of $F$. In this paper, we define the $\gamma$-factor of $\pi$ by
\[
\gamma^\V(s + \frac{1}{2},\pi\times\omega,\psi)
 = \Gamma^\V(s,\pi,\omega,A,\psi) c_{\pi}(-1) R(s,\omega,A,\psi)
\]
where 
\begin{itemize}
\item $A$ is some element of the Lie algebra of $G^\Box$ (for definition, see \S\ref{doubledsp});
\item $\Gamma^\V(s,\pi,\omega,A,\psi)$ is a ``normalized $\Gamma$-factor'', which is defined in \S\ref{dwf}. This factor is obtained from a functional equation of doubling zeta integrals;
\item $c_\pi$ is the central character of $\pi$;
\item $R(s,\omega,A,\psi)$ is a correction term, which is defined in \S\ref{pnf}.
\end{itemize}
We expect the $\gamma$-factor to satisfy
\begin{align}\label{desi}
\gamma^\V(s,\pi\times\omega,\psi) = \gamma(s, \phi_\pi\otimes\omega, \std, \psi)
\end{align}
if the $L$-parameter $\phi_\pi$ is attached to $\pi$. Here, we denote by $\std$ the standard representation of  the $L$-group $^LG$ into $\GL_N(\C)$.
As in \cite{LR05}, one can show that the $\gamma$-factor $\gamma^\V(s,\pi\times\omega,\psi)$ satisfies the global functional equation.
Yamana showed that the $\gamma$-factor $\gamma^\V(s,\pi\times\omega,\psi)$ satisfies some required properties: the multiplicativity, the self-duality, the (local) functional equation, and the equation \eqref{desi} for $G = \GL_n(D)$.
In this paper, we prove the equation \eqref{desi} for $G$ in the archimedean case. Moreover, we prove that the $\gamma$-factor $\gamma^\V(s,\pi\time\omega,\psi)$ is characterized by some required properties. Both are stated in Theorem \ref{maingamma}.

%%%%% contents  %%%%%

In the rest of the introduction, we explain the contents of this paper. In \S\ref{qug}-\S\ref{nio}, we explain the (local) framework of the doubling method. In \S\ref{sm}, we give a definition of the $\gamma$-factor and state the main theorem. We also recall the definition of the Lapid-Rallis $\gamma$-factor. We also define the $L$-factor and the $\epsilon$-factor as in \cite[\S10]{LR05}. In \S\ref{prf} and \S\ref{calcu}, we prove the main theorem. 

In \S\ref{uniqprf}, we prove that the properties of the $\gamma$-factor $\gamma(s,\pi\times\omega,\psi)$ stated in the main theorem determine it uniquely by using a global argument. 
In \S\ref{fp}, we treat some properties which come from the framework of the doubling method. For example, the multiplicativity, the functional equation and the self duality. Note that there is a minor error in the proof of the multiplicativity of \cite{LR05}. So we correct it at this opportunity. 
In \S\ref{sc}, we prove that the $\gamma$-factor defined in \S\ref{sm} is nothing other than Lapid-Rallis $\gamma$-factor in the split case.
Finally, in \S\ref{relLLC}, we discuss the desired property \eqref{desi}. In the archimedean case, we prove it as in \cite{LR05}. In the non-archimedean case, we prove it admitting the local Langlands correspondence.
In \S\ref{calcu}, we discuss some explicit calculations to complete the proof of the main theorem.

In \S\ref{appli}, we give two applications. First, we determine the local root number of irreducible representations of quaternionic unitary groups. This extends the result of Lapid-Rallis \cite[Theorem 1]{LR05}. Second, over a non-archimedean local field of odd residual characteristic, we give an explicit formula of the doubling zeta integrals of some spherical representations with respect to a certain subgroup. In the proof, we use the explicit formula of the $\gamma$-factor of the trivial representation obtained in \S\ref{calcu}.

\subsection*{
                 Acknowledgments:
                  }

The author would like to thank my supervisor A. Ichino for many advices. The author also would like to thank the referee for sincere and useful comments.

%%%%%%%%%%%%%%%%%%%%%%%%%   Quaternionic unitary groups     %%%%%%%%%%%%%%%%%%%%%%%%%%

\section{
            Quaternionic unitary groups
            }\label{qug}

Let $F$ be a field of characteristic zero, and let $D$ be a quaternion algebra over $F$. Denote by $*$ the main involution of $D$. For a finite dimensional right vector space $V$ over $D$, we denote the reduced norm of $\End_D(V)$ by $N_V$. For simplicity, we denote the reduced norm of $\M_n(D)$ by $N$. 
Note that $D$ is possibly split since it appears as a localization of a division quaternion algebra over a number field. If $D$ is split, then ``finite dimensional right vector space over $D$'' means ``free right module over $D$ of finite rank''. We discuss the split case more in \S\ref{morita}.

%%%%%%%%%%

\subsection{
                Hermitian and skew-hermitian spaces over $D$.
                }
                \label{hs}

Fix $\epsilon =\pm1$. Let $\V = (V,h)$ be a pair consisting of an $n$-dimensional right vector space $V$ over $D$ and a map $h:V \times V \rightarrow D$ such that
\begin{enumerate}
\item $h(va,wb) = a^* h(v,w)b$
\item $h(v_1+v_2,w) = h(v_1,w) + h(v_2,w)$
\item $h(w,v) = \epsilon h(v,w)^*$
\end{enumerate}
for $a,b \in D$ and $v,w,v_1,v_2\in V$. We call a pair $\V=(V,h)$ a \emph{hermitian space} (resp. a \emph{skew-hermitian space}) over $D$ if $h$ is non-degenerate and $\epsilon =1$ (resp. $h$ is non-degenerate and $\epsilon = -1$). We use the term ``$\epsilon$-hermitian space'' when we treat hermitian space and skew-hermitian space at the same time. In this paper, we consider the three cases: either the linear case (i.e. the case $h=0$), the hermitian case or the skew-hermitian case.

Let $\V$ be an $n$-dimensional $\epsilon$-hermitian space over $D$. We define the \emph{discriminant} of $\V$ by
\[
 \disc(\V) := (-1)^n N( (h(v_i,v_j) )_{ij} ) \in F^\times / F^{\times 2}
\]
where $v_1,\ldots,v_n$ is a basis of $V$. Then $\disc(\V)$ does not depend on the choice of basis.

%%%%%%%%%%

\subsection{
                Parabolic subgroups
                }
                \label{parabsub}

Let 
\[
G= \Isom(\V)=\{ g \in \GL(V) \mid h(gv,gw) = h(v,w) \mbox{ for } v,w \in V \}
\]
be the isometry group of $\V$ where $\GL(V)$ is the general linear group of $V$. If $P$ is a maximal parabolic subgroup of $G$ over $F$, then there is a totally isotropic subspace $W$ (in the linear case, it is just a subspace) of $V$ such that $P$ coincides with the stabilizer 
\[
P(W)= \{ g \in G \mid gW = W \}
\]
of $W$. We denote the unipotent radical of $P(W)$ by $U(W)$. We put 
\[
\W_0 := (W,0) , \ \ \W_1 := (W^\bot/W,h)
\]
where 
\[
W^\bot:= \{ v \in V \mid h(v,x) = 0 \mbox{ for all } x \in W\}.
\]
Then there is the exact sequence
\[
1 \rightarrow U(W) \rightarrow P(W) \rightarrow \GL(W) \times \Isom(\W_1) \rightarrow 1,
\]
and any Levi subgroup of $P$ is isomorphic to 
\[
\GL(W) \times \Isom(\W_1).
\]
We denote the Lie algebra of $G$ (resp. $P(W), U(W)$) by $\g$ (resp. $\p(W),\u(W)$).

%%%%%%%%%%

\subsection{
                Morita equivalence
                }\label{morita}

In this subsection we assume that $D$ is split. Then we may identify $D$ with $\M_2(F)$.
Put
\[
e:= \begin{pmatrix} 1 & 0 \\ 0 & 0 \end{pmatrix}.
\]
Then for an $n$-dimensional right vector space $V$ over $D$, $V^\natural = Ve$ is a $2n$-dimensional vector space over $F$. For a $D$-linear map $f:V\rightarrow V'$ of right vector spaces over $D$, the restriction $f^\natural:V^\natural \rightarrow {V'}^\natural$ of $f$ is an $F$-linear map. For a zero or an $\epsilon$-hermitian form $h$ on $V$, we define a bilinear form $h^\natural$ on $V^\natural$ such that 
\[
h(xe,ye) = \left( \begin{array}{cc} 0 & 0 \\ h^\natural(xe,ye) & 0 \end{array} \right) 
\] 
for $x,y \in V$. Then, the following properties hold:
\begin{enumerate}
 \item if $h=0$, then $h^\natural=0$;  
 \item if $h$ is hermitian, then $h^\natural$ is symplectic;
 \item if $h$ is skew-hermitian, then $h^\natural$ is symmetric.
\end{enumerate}
Put $\V^\natural= (V^\natural, h^\natural)$ and 
\[
G^\natural = \Isom(\V^\natural) 
= \{ g \in \GL(V^\natural;F) \mid h^\natural(gv, gw) = h^\natural(v,w) \mbox{ for } v,w \in V^\natural\}. 
\]
Then we have an isomorphism $\iota_G: G \rightarrow G^\natural: g \mapsto g^\natural$.
If $\pi$ is a representation of $G$, then we denote by $\pi^\natural$ the representation of $G^\natural$ such that $\pi = \pi^\natural\circ \iota_G$.

%%%%%%%%%%%%%%%%%%%%%%%%%%%     Doubling zeta integrals    %%%%%%%%%%%%%%%%%%%%%%%%%%

\section{
            Doubling zeta integrals
            }\label{dzi}

%%%%%%%%%%

\subsection{
                Doubling $\epsilon$-hermitian spaces and unitary groups 
                }\label{doubledsp}
Let $\V^\Box = (V^\Box,h^\Box)$ be a pair, where $V^\Box= V\times V$ and $h^\Box = h \oplus (-h)$, that is, the map defined by
\[
h^\Box((v_1,v_2),(w_1,w_2)) = h(v_1,w_1)-h(v_2,w_2)
\]
for $v_1,v_2,w_1,w_2 \in V$. Let $G^\Box=\Isom(\V^\Box)$. Then $G\times G$ acts on $V \times V$ by 
\[
(g_1, g_2)\cdot (v_1,v_2) = (g_1v_1,g_2v_2), 
\]
so that $G \times G$ can be embedded naturally in $G^\Box$. Consider the maximal totally isotropic subspaces
\begin{align*}
V^\tru &= \{ (v,v) \in V^\Box \mid v \in V\}, \\
V^\trd & =\{ (v,-v) \in V^\Box \mid v \in V\}.
\end{align*}
Note that $V = V^\tru \oplus V^\trd$. Then $P(V^\tru)$ is a maximal parabolic subgroup of $G^\Box$ and its Levi part is isomorphic to
\[
\begin{cases}
\GL(V^\tru) \times \GL(V^\Box/V^\tru) & \mbox{ in the linear case} \\
\GL(V^\tru) & \mbox{ in the $\epsilon$-hermitian case}.
\end{cases}
\]

%%%%%%%%%%

\subsection{
                Zeta integrals and intertwining operators 
                }\label{zeta}

Assume that $F$ is a local field of characteristic zero. Denote by $\Delta_{V^\tru}$ the character of $P(V^\tru)$ given by
\[
\Delta_{V^\tru}(x) = \begin{cases}
                            N_{V^\tru}(x) \cdot N_{V^\Box/V^\tru}(x)^{-1} 
                                         & \mbox{ in the linear case}, \\
                            N_{V^\tru}(x) & \mbox{ in the $\epsilon$-hermitian case}.                                
                            \end{cases}
\]
Here, $N_{V^\tru}(x)$ ({resp. }$N_{V^\Box/V^\tru}(x)$) is the reduced norm of the image of $x$ in $\End_DV^\tru$ (resp. $\End_DV^\Box/V^\tru$).  
Let $\omega:F^\times\rightarrow \C^\times$ be a character. For $s \in \C$, put $\omega_s = \omega \cdot | \cdot |^s$. Choose a maximal compact subgroup $K$ of $G^\Box$ such that $G^\Box = P(V^\tru)K$. Denote by $I(s,\omega)$ the degenerate principal series representation
\[
\Ind_{P(V^\tru)}^{G^\Box}(\omega_s\circ \Delta_{V^\tru})
\]
consisting of the smooth right $K$-finite functions $f:G^\Box \rightarrow \C$ satisfying
\[
 f(pg) = \delta_{P(V^\tru)}^{\frac{1}{2}}(p) \cdot \omega_s(\Delta_{V^\tru}(p)) \cdot f(g)
\]
for $p \in P(V^\tru)$ and $g \in G^\Box$, where $\delta_{P(V^\tru)}$ is the modular function of $P(V^\tru)$. We may extend $|\Delta_{V^\tru}|$ to a right $K$-invariant function on $G^\Box$ uniquely. For $f \in I(0,\omega)$, put $f_s = f \cdot |\Delta_{V^\tru}|^s \in I(s,\omega)$. Define an intertwining operator $M(s,\omega): I(s,\omega) \rightarrow I(-s,\omega^{-1})$ by
\[
(M(s,\omega)f_s)(g) = \int_{U(V^\tru)}f_s(w_1ug) \:du
\]
where $w_1 = (1,-1) \in G \times G \subset G^\Box$. This integral defining $M(s,\omega)$ converges absolutely for $\Re s \gg 0$ and admits a meromorphic continuation to $\C$. 
Let $\pi$ be an irreducible representation of $G$. For a matrix coefficient $\xi$ of $\pi$, and for $f \in I(\omega,0)$, define the zeta integral by
\[
Z^\V(f_s,\xi) = \int_{G}f_s((g,1))\xi(g) \:dg. 
\]
Then the zeta integral satisfies the following properties, which is stated in \cite[Theorem $4.1$]{Yam14}. This gives a generalization of \cite[Theorem 3]{LR05}. 

\begin{thm}\label{zeta_int}
\begin{enumerate}
\item The integral $Z^\V(f_s,\xi)$ converges absolutely for $\Re s \gg 0$
         and extends to a meromorphic function in $s$. Moreover, if $F$ is non-archimedean,
         the function $Z^\V(f_s,\xi)$ is a rational function of $q^{-s}$. 
         Here $q$ denotes the cardinality of the residue field of $F$.
\item There is a meromorphic function $\Gamma^\V(s,\pi,\omega)$ such that 
\[
Z^\V(M(s, \omega)f_s,\xi) = \Gamma^\V(s,\pi,\omega) Z^\V(f_s,\xi)
\]
for all matrix coefficient $\xi$ of $\pi$ and $f_s \in I(s,\omega)$.
\end{enumerate}
\end{thm}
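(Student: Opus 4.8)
The result is the analytic cornerstone of the doubling method, and the plan is to follow the template of Piatetski-Shapiro--Rallis and of \cite{LR05}, inserting the modifications needed over the quaternion algebra $D$; this is what is carried out in \cite{Yam14}. For part (1), I would first prove absolute convergence of $Z^\V(f_s,\xi)$ for $\Re s\gg 0$. Fixing a minimal parabolic subgroup and a maximal compact subgroup of $G$ and using the Iwasawa decomposition, the restriction of a standard section $f_s$ to $G=G\times 1\subset G^\Box$ is bounded, on the dominant part of the torus, by $\delta_{P(V^\tru)}^{1/2}\,|\Delta_{V^\tru}|^{\Re s}$, which decays like a positive power of the torus parameter once $\Re s$ is large, while a matrix coefficient $\xi$ of the irreducible representation $\pi$ is dominated by Harish-Chandra's $\Xi$-function times a polynomial; multiplying these estimates yields convergence in a right half-plane. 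For the meromorphic continuation, and for the rationality in $q^{-s}$ when $F$ is non-archimedean, I would use the asymptotic expansion of matrix coefficients along the dominant chamber -- via Jacquet modules and Casselman's canonical pairing in the $p$-adic case, and Casselman--Wallach asymptotics in the archimedean case: there $\xi$ is a finite sum of products of a finite-order character and a polynomial, and integrating such a sum against the $|\Delta_{V^\tru}|^s$-weight over the cone produces a finite combination of geometric series, hence a rational function of $q^{-s}$ (respectively a meromorphic function whose poles form a locally finite set). The only change relative to \cite{LR05} is that the reduced norm $N_{V^\tru}$ replaces a determinant, which affects only the bookkeeping.

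For part (2), the core is a multiplicity-one statement. I would first determine the orbits of $G\times G$ on $P(V^\tru)\backslash G^\Box$: there is a unique open orbit, represented by an element $\delta\in G^\Box$ for which $\delta V^\tru$ is transverse to both factors of $V^\Box=V\times V$, and the stabilizer of $P(V^\tru)\delta$ in $G\times G$ is the diagonal copy $\Delta G\cong G$, the character $\omega_s\circ\Delta_{V^\tru}$ restricting to $\Delta G$ trivially up to the central character, which is carried by $\xi$. Restricting a section of $I(s,\omega)$ to the open orbit realizes it, as a $G\times G$-module, as an induction from $\Delta G$, and Frobenius reciprocity then identifies the space of $G\times G$-equivariant trilinear forms on $I(s,\omega)\otimes\pi\otimes\pi^\vee$ with $\Hom_G(\pi\otimes\pi^\vee,\C)=\End_G(\pi)$, which is one-dimensional by Schur's lemma. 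A standard filtration argument over the lower-dimensional orbits, combined with meromorphic continuation, shows these orbits contribute nothing to such a functional for $s$ outside a discrete set. Since $M(s,\omega)$ is $G^\Box$-equivariant, hence $G\times G$-equivariant, both $f_s\mapsto Z^\V(f_s,\xi)$ and $f_s\mapsto Z^\V(M(s,\omega)f_s,\xi)$ are equivariant trilinear forms of this type, so they differ by a scalar $\Gamma^\V(s,\pi,\omega)$ depending only on $s$; non-vanishing of $Z^\V(f_s,\xi)$ for $f_s$ supported near the open-orbit base point and a suitable $\xi$ shows $\Gamma^\V$ is well defined, and its meromorphy follows from part (1).

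The main obstacle is the multiplicity-one statement for every irreducible $\pi$, especially in the archimedean case, where one must work with Casselman--Wallach smooth Fr\'echet representations and check that the spaces of continuous equivariant functionals behave as in the $p$-adic setting; and, intertwined with this, the explicit description of the $G\times G$-orbits on the flag variety of $G^\Box$ together with the verification that each non-open orbit is negligible -- the part of the argument where the non-split quaternionic structure genuinely intervenes and where \cite{Yam14} goes beyond \cite{LR05}.
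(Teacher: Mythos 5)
The paper does not prove this theorem itself; it simply quotes it from \cite[Theorem 4.1]{Yam14} (as a generalization of \cite[Theorem 3]{LR05}), and your outline --- convergence via Iwasawa-decomposition estimates and asymptotics of matrix coefficients, continuation/rationality via Jacquet--Casselman (resp.\ Casselman--Wallach) expansions, and the proportionality via the open $G\times G$-orbit on $P(V^\tru)\backslash G^\Box$ with diagonal stabilizer, multiplicity one, and negligibility of the non-open orbits --- is exactly the standard argument carried out in that reference. So your proposal is correct and takes essentially the same route as the proof the paper relies on.
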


%%%%%%%%%%%%%%%%%%%%     Intertwining operator       %%%%%%%%%%%%%%%%%%%

\section{
            Intertwining operator and Whittaker normalization
            }\label{nio}

%%%%%%%%%%

\subsection{
                Degenerate Whittaker functionals
                }\label{dwf}

We use the notation of \S\ref{qug} and \S\ref{dzi}. Note that $D$ is possibly split. We regard $\u(V^\tru)$ as a subspace of $\End_D(V^\Box)$ and we denote by $\u(V^\tru)_{reg}$ the set of $A \in \u(V)$ of rank $n$.
Fix a non-trivial additive character $\psi: F \rightarrow \C^\times$ and $A \in \u(V^\tru)_{reg}$. We define
\[
\psi_A: U(V^\trd) \rightarrow \C^\times: u \mapsto \psi(\operatorname{Tr}_{V^\Box}(uA))
\]
where $\operatorname{Tr}_{V^\Box}$ denotes the reduced trace of $\End_D(V^\Box)$. For $f \in I(\omega,0)$ we define
\[
l_{\psi_A}(f_s) = \int_{U(V^\trd)}f_s(u)\psi_A(u) \:du.
\]
Then this integral defining $l_{\psi_A}$ converges for $\Re s \gg 0$ and admits a holomorphic continuation to $\C$. For the proof, see \cite[\S3.3]{Kar79} in the non-archimedean case, \cite[Theorem 7.1, Theorem 7,2]{Wal88} in the archimedean case. 
The functional $l_{\psi_A}$ is called a \emph{degenerate Whittaker functional}, which is a $(U(V^\trd),\overline{\psi_A})$-equivariant functional on $I(s,\omega)$. 
On the other hand, the space of $(U(V^\trd),\overline{\psi_A})$-equivariant functionals on $I(s,\omega)$ is one dimensional for all $s \in \C$ (see \cite[Theorem 3.2]{Kar79}).
Hence we have the following proposition.

\begin{prop}
There is a meromorphic function $c(s,\omega,A,\psi)$ of $s$ such that 
\[
l_{\psi_A} \circ M(s,\omega) = c(s,\omega,A,\psi) l_{\psi_A}.
\]
\end{prop}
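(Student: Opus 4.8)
The plan is to deduce the existence of the proportionality constant $c(s,\omega,A,\psi)$ directly from the one-dimensionality of the space of $(U(V^\trd),\overline{\psi_A})$-equivariant functionals on $I(s,\omega)$, exactly as one proves the existence of a local coefficient in Whittaker theory. First I would check that both $l_{\psi_A}$ and $l_{\psi_A}\circ M(s,\omega)$ are $(U(V^\trd),\overline{\psi_A})$-equivariant functionals on $I(s,\omega)$. For $l_{\psi_A}$ this is the content of the discussion preceding the proposition (the functional is defined precisely so as to transform by $\overline{\psi_A}$ under right translation by $U(V^\trd)$). For $l_{\psi_A}\circ M(s,\omega)$ one uses that $M(s,\omega):I(s,\omega)\to I(-s,\omega^{-1})$ is $G^\Box$-equivariant, hence in particular intertwines the right $U(V^\trd)$-actions, so that composing with the equivariant functional $l_{\overline{\psi_A}}$ on $I(-s,\omega^{-1})$ produces an equivariant functional on $I(s,\omega)$. (Here I am implicitly using that $A\in\u(V^\tru)_{reg}$ also serves to define a degenerate Whittaker functional on the target space $I(-s,\omega^{-1})$; the character $\psi_A$ on $U(V^\trd)$ makes sense independently of $\omega$ and $s$.)

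Next, for a fixed $s$ in the range where all the integrals converge, the one-dimensionality theorem (\cite[Theorem 3.2]{Kar79} in the non-archimedean case, and the corresponding uniqueness statement in the archimedean case) gives a scalar $c(s,\omega,A,\psi)$ with
\[
l_{\psi_A}\circ M(s,\omega) = c(s,\omega,A,\psi)\, l_{\psi_A},
\]
provided $l_{\psi_A}$ is nonzero on $I(s,\omega)$, which holds for generic $s$ by the holomorphic continuation and non-vanishing already recorded for $l_{\psi_A}$. It remains to see that $s\mapsto c(s,\omega,A,\psi)$ is meromorphic. For this I would evaluate both sides on a suitable flat section $f_s$ with $l_{\psi_A}(f_s)$ not identically zero: then $c(s,\omega,A,\psi) = l_{\psi_A}(M(s,\omega)f_s)/l_{\psi_A}(f_s)$ is a quotient of two meromorphic functions of $s$ — the numerator is meromorphic because $M(s,\omega)$ admits a meromorphic continuation (as stated in \S\ref{zeta}) and $l_{\psi_A}$ on $I(-s,\omega^{-1})$ depends holomorphically on $s$, and the denominator is meromorphic by the holomorphic continuation of $l_{\psi_A}$ — hence the ratio is meromorphic, and the identity then holds for all $s$ by analytic continuation.

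The only genuinely delicate point is the uniqueness (one-dimensionality) input in the \emph{archimedean} case: \cite[Theorem 3.2]{Kar79} is the non-archimedean statement, and over $\R$ or $\C$ one needs the analogous multiplicity-one result for degenerate Whittaker models on the degenerate principal series $I(s,\omega)$, together with the fact that the space of such functionals stays one-dimensional as $s$ varies (so that the continuation argument via a single section is legitimate). This can be handled by the same sources already cited for the holomorphic continuation of $l_{\psi_A}$ — namely \cite{Wal88}, Theorems 7.1 and 7.2 — but it is the step I expect to require the most care, since one must ensure the holomorphic family of functionals behaves uniformly in $s$. Everything else is formal equivariance bookkeeping plus invocation of the meromorphic continuation of $M(s,\omega)$.
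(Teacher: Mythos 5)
Your argument is exactly the paper's: the paper deduces the proposition from the $(U(V^\trd),\overline{\psi_A})$-equivariance of $l_{\psi_A}\circ M(s,\omega)$ together with the one-dimensionality of the space of such functionals on $I(s,\omega)$ for all $s$ (citing \cite{Kar79} and \cite{Wal88}), and obtains meromorphy from the meromorphic continuation of $M(s,\omega)$ and the holomorphy of $l_{\psi_A}$. Your write-up simply makes explicit the equivariance bookkeeping and the continuation-via-a-section step that the paper leaves implicit.
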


Then we define the normalized intertwining operator 
\[
M^*(s,\omega,A,\psi) = c(s,\omega,A,\psi)^{-1}M(s,\omega)
\]
and put
\[
\Gamma^\V(s,\pi,\omega,A,\psi) = c(s,\omega,A,\psi)^{-1} \Gamma^\V(s,\pi,\omega).
\]
Clearly, $\Gamma^\V(s,\pi,\omega,A,\psi)$ is a meromorphic function of $s$ satisfying
\begin{align}\label{Ga*}
Z(M^*(s,\omega,A,\psi)f_s,\xi) = \Gamma^\V(s,\pi,\omega,A,\psi)Z(f_s,\xi)
\end{align}
for any $f \in I(\omega,0)$ and any matrix coefficient $\xi$ of $\pi$. Note that the function $\Gamma^\V(s,\pi,\omega,A,\psi)$ does not depend on the choice of measure on $G$.

%%%%%%%%%%

\subsection{
                The normalization factor
                }\label{pnf}

In this subsection, we study the basic properties of the normalizing factor $c(s,\omega,A,\psi)$.
First, we give an explicit formula of $c(s,\omega,A,\psi)$. Second, we study the dependence of $C(s,\omega,A,\psi)$ by the change of $\psi$.

To give an explicit formula, it is necessary to specify the Haar measure $du$ in the definition of $M(s,\omega)$ (\S\ref{zeta}). Note that we do not have to pay attention to the choice of Haar measure in the later sections since the normalized intertwining operator $M(s,\omega,A,\psi)$ does not depend on that. Let $v_1, \cdots, v_n$ be a basis of $V$, and let $e_1, \ldots, e_{2n}$ be a basis   
\[
e_j = \begin{cases}
       (v_j,v_j) & 1 \leq j \leq n \\
       (v_{j-n},-v_{j-n}) & n+1 \leq j \leq 2n.
       \end{cases}
\]
Then, we may regard $G^\Box \subset \GL_{2n}(D)$ by this basis.  
In the linear case, putting $\u=\M_n(D)$, we have a bijection 
\[
\iota: \u \rightarrow \u(V^\tru): X \mapsto \begin{pmatrix} 0 & X \\ 0 & 0 \end{pmatrix}.
\]
In the $\epsilon$-hermitian case, putting
\[
\u = \{ X \in \M_n(D) \mid {}^t\!X^* = -\epsilon X \},
\]
we have a bijection
\[
\iota: \u \rightarrow \u(V^\tru): X \mapsto \begin{pmatrix} 0 & XR \\ 0 & 0 \end{pmatrix}
\]
where $R = (h(v_i, v_j))_{ij} \in \GL_n(D)$. Let $d^\psi X$ be a self-dual Haar measure on $\u$ with respect to the pairing
\[
\u \times \u \rightarrow \C: (X,Y) \mapsto \psi(T(XY)).
\]
In this subsection, we choose the push-forward measure $\iota_*(d^\psi X)$ for $du$.

Now we state the explicit formula of $c(s,\omega,A,\psi)$. Note that we do not use it in the proof of the main theorem. 
We denote $e(G)$ the invariant of Kottwitz \cite{Kot83}. If $D$ is split, then we have $e(G)=1$. If $D$ is not split, then we have
\[
e(G) = \begin{cases}
         (-1)^n & \mbox{in the linear case} \\
         (-1)^{\frac{1}{2}n(n-1)} & \mbox{in the skew-hermitian case} \\
         (-1)^{\frac{1}{2}n(n+1)} & \mbox{in the hermitian case}.
         \end{cases}
\]
Let $A \in \u(V^\tru)_{reg}$. We define $R(s,\omega,A,\psi)$ by
\[
R(s,\omega,A,\psi) = \begin{cases}
    \omega_s(N_V(\frac{1}{2}A))^{-2} & \mbox{in the linear case},\\
     \omega_s(N_V(A))^{-1} \gamma(s+\frac{1}{2},\omega\chi_{\disc(A)},\psi)
    \epsilon(\frac{1}{2},\chi_{\disc(A)},\psi)^{-1} & \mbox{in the hermitian case},\\
     \omega_s(N_V(A))^{-1} \epsilon(\frac{1}{2},\chi_{\disc(\V)},\psi)
                                                & \mbox{in the skew-hermitian case}.
                              \end{cases}
\]
\begin{prop}\label{norm_ex}
\begin{enumerate}
\item In the linear case, we have \label{norm_ex1}
\[
c(s,\omega,A,\psi) = e(G)\cdot|2|^{-4ns}\omega^{-2n}(4)
                           \cdot \prod_{i=0}^{2n-1}\gamma(2s-i,\omega^2,\psi)^{-1}
                           \cdot R(s,\omega,A,\psi)^{-1},
\]
\item In the $\epsilon$-hermitian case, we have 
\[
c(s,\omega,A,\psi) = e(G)\cdot|2|^{-2ns+n(n-\frac{1}{2})}\omega^{-n}(4)
                            \cdot\prod_{i=0}^{n-1}\gamma(2s-2i,\omega^2,\psi)^{-1} 
                            \cdot R(s,\omega,A,\psi)^{-1}.
\]
\end{enumerate}
\end{prop}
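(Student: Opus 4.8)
The plan is to compute the normalizing factor $c(s,\omega,A,\psi)$ by reducing the computation of the composite $l_{\psi_A}\circ M(s,\omega)$ to an explicit Gindikin--Karpelevich type integral over the unipotent radical $U(V^\tru)$, which after the change of variables given by the bijection $\iota$ becomes an integral over the additive group $\u$. Concretely, I would unfold
\[
l_{\psi_A}(M(s,\omega)f_s)=\int_{U(V^\trd)}\int_{U(V^\tru)}f_s(w_1 u\, u')\psi_A(u')\,du\,du'
\]
and, using the Bruhat decomposition $w_1 u w_1^{-1}\in P(V^\tru)\cdot w_1\cdot U(V^\tru)$ for generic $u$, rewrite the inner factor $f_s(w_1 u w_1^{-1}\cdot w_1\cdot \dots)$ in terms of the character $\omega_s\circ\Delta_{V^\tru}$ and the modular character $\delta_{P(V^\tru)}$. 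Since $l_{\psi_A}$ is characterized (up to scalar, by the one-dimensionality in \cite[Theorem 3.2]{Kar79}) as the unique $(U(V^\trd),\overline{\psi_A})$-equivariant functional, it suffices to evaluate both sides against a single convenient section $f_s$, so I would take $f_s$ supported near the big cell and arrange that $l_{\psi_A}(f_s)$ is an explicit Gauss-type integral; then $c(s,\omega,A,\psi)$ emerges as the ratio of two such integrals.

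The next step is to identify the resulting archimedean/$p$-adic integral as a product of local $\gamma$-factors. After passing through $\iota$, the integrand is of the form $\psi(T(XY))$ times a power of a norm or reduced norm of a matrix built from $X$ and $A$; in the linear case this yields, after the substitution absorbing $R$, a product $\prod_{i=0}^{2n-1}\gamma(2s-i,\omega^2,\psi)^{-1}$ coming from the successive one-variable Tate integrals along a Bruhat-type stratification of $\u=\M_n(D)$, while in the $\epsilon$-hermitian case the symmetry condition ${}^t\!X^*=-\epsilon X$ cuts the range so that only the ``diagonal'' steps $i=0,1,\dots,n-1$ survive, giving $\prod_{i=0}^{n-1}\gamma(2s-2i,\omega^2,\psi)^{-1}$. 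The powers of $|2|$ and $\omega(4)$ are bookkeeping from the factor $\tfrac12 A$ appearing in $R$ and from the self-dual measure normalization $d^\psi X$; these I would track by a direct measure comparison, not by any conceptual argument. The Kottwitz sign $e(G)$ enters through the comparison of the self-dual measure on $\u$ with the measure $du$ pushed forward along $\iota$ in the non-split case, where $D$ contributes a discrepancy governed exactly by the Hasse invariant — this is where the case distinction in the formula for $e(G)$ comes from.

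I expect the main obstacle to be the non-split quaternionic case, i.e.\ controlling the reduced-norm factors and the sign $e(G)$ when $D$ is a division algebra. Over a split $D$ one can invoke Morita equivalence (\S\ref{morita}) to transfer everything to symplectic or orthogonal groups over $F$, where the analogue of this formula is essentially \cite[\S4]{LR05}; but when $D$ is division there is no such reduction, and the Bruhat cell computation must be done intrinsically over $D$, keeping careful track of the reduced norm $N_V$ versus the determinant and of how the main involution $*$ interacts with $w_1$. In particular, the appearance of $\chi_{\disc(A)}$ (hermitian case) versus $\chi_{\disc(\V)}$ (skew-hermitian case) in $R(s,\omega,A,\psi)$ reflects a subtle point about whether the quadratic character attached to the relevant discriminant depends on $A$ or only on $\V$, and verifying this requires a careful analysis of the orbit of $A$ under the Levi $\GL(V^\tru)$ acting by conjugation. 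I would organize the proof so that the split case is dispatched by Morita equivalence and citation, and then spend the bulk of the argument on the division case, reducing as far as possible to rank-one computations over $D$ and assembling the answer by induction on $n$.
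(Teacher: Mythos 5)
Your overall strategy --- unfolding $l_{\psi_A}\circ M(s,\omega)$ into a Gindikin--Karpelevich-type integral over $\u$ and evaluating it as a product of Tate $\gamma$-factors --- is indeed how formulas of this kind are ultimately established, but as written the proposal stops exactly where the real work begins. The paper does not carry out this computation at all: it reduces the linear case to an analogue of \cite[Lemma 3.1]{Ike17}, which expresses $c(s,\omega,A,\psi)$ as $e(G)\,\omega_{2s}^2(N_V(A))$ times the inverse of a Godement--Jacquet $\gamma$-factor of $\omega^2\circ N$ on $\GL_n(D)$, and it quotes \cite[Theorem 3.1]{Yam17} for the skew-hermitian case and \cite[Appendix]{Igu86} for the hermitian case. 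Those citations are precisely the evaluation of the integral you propose to do ``intrinsically over $D$'', and each is a substantial computation (the quaternionic skew-hermitian case is the main theorem of an entire paper). Your plan of ``reducing to rank-one computations over $D$ and assembling by induction on $n$'' is asserted, not executed: the stratification of $\u$ that produces $\prod_{i=0}^{n-1}\gamma(2s-2i,\omega^2,\psi)^{-1}$ with arguments descending in steps of $2$, and the verification that the off-diagonal strata contribute only the elementary factors $|2|^{\ast}\omega^{\ast}(4)$, is the content of the functional equation of the (quaternionic) Siegel series and cannot be waved through.

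Two specific points are off. First, the Kottwitz sign $e(G)$ does not come from ``comparison of the self-dual measure on $\u$ with the measure $du$ pushed forward along $\iota$'': the measure $du$ is \emph{defined} as $\iota_*(d^\psi X)$, so there is no discrepancy to compare. The sign arises from Weil indices (Gauss sums) of quadratic forms on $\u$ built from $A$ and the reduced trace, where the Hasse invariant of $D$ enters; this same mechanism governs whether the surviving discriminant character is $\chi_{\disc(A)}$ (hermitian case) or $\chi_{\disc(\V)}$ (skew-hermitian case) --- the very point you flag as subtle but do not settle. Second, the uniqueness of the $(U(V^\trd),\overline{\psi_A})$-equivariant functional does let you test against a single section, but you must still show that your section ``supported near the big cell'' is admissible and that $l_{\psi_A}$ does not vanish on it; in the archimedean case this rests on the holomorphic continuation results of \cite{Wal88} rather than on a formal manipulation. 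To turn the proposal into a proof you would either have to carry out the full Igusa/Ikeda/Yamana-style computation over the division algebra, or do as the paper does and reduce to those references.
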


\begin{proof}
In the linear case, by the analogue of \cite[Lemma 3.1]{Ike17}, we have
\[
c(s,\omega,A,\psi) = e(G)\omega_{2s}^2(N_V(A))\gamma_{\GL_n(D)}^{GJ}(2s-n+\frac{1}{2}, \omega^2\circ N, \psi)^{-1}
\]
where the $\gamma$-factor in the right hand side is the Godement-Jacquet $\gamma$-factor of the representation $\omega^2\circ N: \GL_n(D) \rightarrow \C^\times$. Hence we have the claim \eqref{norm_ex1}. In the skew-hermitian case, the claim is proved in \cite[Theorem 3.1]{Yam17}. In the hermitian case, the claim is proved in \cite[Appendix]{Igu86} considering the analogue of \cite[Lemma 3.1]{Ike17}.  
\end{proof}

%\begin{proof}
%The triple $(M, \u(V^\tru), \Ad)$ is a pre-homogenious vector space. 
%Let 
%\[
%\u(V^\tru)= \sqcup_{i=1}^l \cO_i 
%\]
%be the orbit decomposition of $\u(V^\tru)$ by $\Ad$.
%Then, for a schwartz function $\Phi$ of $\u(V^\tru)$, we define 
%\[
%Z_{\cO_i}(\Phi, \omega, s) := \int_{\cO_i}\Phi(X)\omega(N(X)) \: d^\times X.
%\]
%This integral converge absolutely for $\Re s \gg 0$, and admit a meromorphic continuation to $\C$. We define $Z_{\u(V^\tru)} = Z_{\cO_1} + \cdots + Z_{\cO_l}$. Then, we have
%\begin{align}\label{FEph}
%Z_{\u(V^\tru)}(\Phi,\omega,s) = \sum_{i=}^l {c_i}'(s,\omega,\psi)
%                                                Z_{\cO_i}(\widehat{\Phi}, \omega^{-1}, \rho-s)
%\end{align}
%for some meromorphic function $c'(s,\omega,A,\psi)$.
%Note that we can obtain the normalization factor $c(s,\omega,A,\psi)$ from ${c_i}'(s,\omega,\psi)$ as in \cite[\S4]{Ike17}, which has has been already computed (\cite{Yam11}, \cite[Appendix]{Igu86} for the hermitian case,  \cite{Ike17}, \cite{Yam17} for the skew-hermitian case).
%In the linear case, \eqref{FEph} is nothing other than the functional equation of Godement-%Jacquet Zeta integral. 
%\end{proof}

In the later part of this subsection, we study the dependence of $c(s,\omega,A,\psi)$ by the change of $\psi$. For $a \in F^\times$, we denote by $\psi_a$ the the additive character $x \mapsto \psi(ax)$ of $F$, and we denote
\[
   T_N(s,\omega,a)
    =\begin{cases}
     \omega_{s-\frac{1}{2}}(a)^N 
        &  \mbox{ in the linear case, the hermitian case}, \\
     \omega_{s-\frac{1}{2}}(a)^N\chi_{\disc(\V)}(a) 
        &  \mbox{ in the skew-hermitian case}
     \end{cases}
\]
where 
\[
N = \begin{cases}
   4n & \mbox{ in the linear case }, \\
   2n+1 & \mbox{ in the hermitian case }, \\
   2n & \mbox{ in the skew-hermitian case }.
   \end{cases}
\]
\begin{lem}\label{whn1}
Let $a \in F^\times$. Then we have
  \[
    c(s,\omega,A,\psi_a) = T_N^{-1}(s,\omega,a) \cdot c(s,\omega,A,\psi).
   \]
\end{lem}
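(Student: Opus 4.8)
The plan is to track how the normalizing factor $c(s,\omega,A,\psi)$ transforms under the substitution $\psi \rightsquigarrow \psi_a$ directly from its defining functional equation $l_{\psi_A}\circ M(s,\omega) = c(s,\omega,A,\psi)\,l_{\psi_A}$, exploiting the one-dimensionality of the space of $(U(V^\trd),\overline{\psi_A})$-equivariant functionals. The first step is to observe that changing $\psi$ to $\psi_a$ in the definition of $\psi_A$ has the same effect as scaling $A$: explicitly, $(\psi_a)_A(u) = \psi(a\,\Tr_{V^\Box}(uA)) = \psi(\Tr_{V^\Box}(u\cdot aA)) = \psi_{aA}(u)$, so $l_{(\psi_a)_A} = l_{\psi_{aA}}$ and hence $c(s,\omega,A,\psi_a) = c(s,\omega,aA,\psi)$. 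Thus Lemma \ref{whn1} reduces to understanding the dependence of $c(s,\omega,A,\psi)$ on $A \in \u(V^\tru)_{reg}$, for fixed $\psi$.

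Next I would analyze the $A$-dependence by conjugation. For $t \in F^\times$, or more generally for $g$ in the Levi factor $\GL(V^\tru)$ (or its image $\GL_n(D)$ under the basis identification from \S\ref{pnf}), conjugation by $g$ carries $U(V^\trd)$ to itself — acting on $\u(V^\tru) \cong \u$ through the appropriate action — and carries $\psi_A$ to $\psi_{A'}$ where $A'$ is the corresponding translate of $A$. Since $M(s,\omega)$ is $G^\Box$-equivariant and the degenerate principal series functions transform on the left by $\delta_{P(V^\tru)}^{1/2}\cdot\omega_s\circ\Delta_{V^\tru}$, composing $l_{\psi_A}$ with left translation by such a $g$ produces $l_{\psi_{A'}}$ up to an explicit scalar built from $\delta_{P(V^\tru)}^{1/2}(g)$ and $\omega_s(\Delta_{V^\tru}(g))$. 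Because this scalar is the same on both sides of the functional equation, it cancels, and we learn that $c(s,\omega,A,\psi)$ is \emph{invariant} under the $\GL(V^\tru)$-action on $A$ — which acts transitively on $\u(V^\tru)_{reg}$ modulo the orbit structure. Concretely, writing $A = \iota(X)$ with $X\in\u$ of full rank, scaling $\psi$ by $a$ corresponds to $X \mapsto aX$, and I compute the Jacobian/modulus factor: this is exactly where the exponent $N$ and the quantities $N_V(A)$, $\disc(\V)$ enter. The cleanest route is to fix one convenient representative $A_0$ (e.g. coming from $X_0 = R^{-1}$ in the hermitian/skew-hermitian case, or $X_0 = \Id$ in the linear case) and track $c(s,\omega,A_0,\psi_a)$ versus $c(s,\omega,A_0,\psi)$ via the self-dual measure $d^\psi X$: replacing $\psi$ by $\psi_a$ rescales the self-dual measure by $|a|^{\dim_F(\u)/2}$ and twists the pairing, and pulling this through the integral defining $l_{\psi_A}\circ M(s,\omega)$ yields the factor $T_N(s,\omega,a)$ with $N = 4n, 2n+1, 2n$ in the three cases respectively (the half-integer $\frac12$ shift and the $\chi_{\disc(\V)}(a)$ twist in the skew-hermitian case arising from the parity of $\dim_F\u$ and the reduced-norm normalization).

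The main obstacle will be the precise bookkeeping of the modulus and measure factors: one must be careful that $l_{\psi_A}$ is defined by an integral over $U(V^\trd)$ against $\psi_A$ while $M(s,\omega)$ integrates over $U(V^\tru)$ against the fixed measure $\iota_*(d^\psi X)$, so the $\psi$-dependence enters \emph{twice} — once through the self-dual measure on $\u$ used to build $du$ in $M(s,\omega)$, and once through the character $\psi_A$ in $l_{\psi_A}$ itself — and these contributions must be combined correctly, with attention to the different dimensions $\dim_F\u = 4n^2$ (linear), $2n^2+n$ (hermitian, since ${}^tX^* = -\epsilon X$ with $\epsilon=1$ forces the appropriate symmetry count), and $2n^2-n$ (skew-hermitian). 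I would double-check the outcome against the explicit formula in Proposition \ref{norm_ex}: substituting $\psi_a$ there, using the standard identities $\gamma(s,\chi,\psi_a) = \chi(a)|a|^{s-1}\gamma(s,\chi,\psi)$ and $\epsilon(s,\chi,\psi_a) = \chi(a)|a|^{s-1}\epsilon(s,\chi,\psi)$ together with $R(s,\omega,A,\psi_a) = R(s,\omega,aA,\psi)$, should reproduce exactly the claimed factor $T_N^{-1}(s,\omega,a)$, providing an independent consistency check even though the proof itself is meant to be measure-theoretic and formula-independent.
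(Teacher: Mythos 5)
Your overall strategy---reduce the change $\psi \rightsquigarrow \psi_a$ to the change $A \rightsquigarrow aA$ via the identity $(\psi_a)_A = \psi_{aA}$, then track the $A$-dependence of $c$ together with the normalization of the measure, and finally cross-check against the explicit formula---is in spirit the route the paper takes: its proof consists of citing \cite[Lemma 10]{LR05} for precisely this kind of argument and remarking that the statement also follows directly from Proposition \ref{norm_ex}. The reduction $l_{(\psi_a)_A} = l_{\psi_{aA}}$ is correct, and you are right to flag that the $\psi$-dependence enters both through the character in $l_{\psi_A}$ and through the self-dual measure used to define $M(s,\omega)$.

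However, the middle step contains a genuine error: $c(s,\omega,A,\psi)$ is \emph{not} invariant under the Levi action on $A$. The scalar produced by translating $l_{\psi_A}$ by an element $m$ of the Levi of $P(V^\tru)$ does not cancel between the two sides of $l_{\psi_A}\circ M(s,\omega) = c\cdot l_{\psi_A}$, because on the left the functional is evaluated on $M(s,\omega)f_s \in I(-s,\omega^{-1})$ while on the right it is evaluated on $f_s \in I(s,\omega)$; the two left-transformation characters $\omega_s(\Delta_{V^\tru}(\cdot))$ and $\omega^{-1}_{-s}(\Delta_{V^\tru}(\cdot))$ differ, so $c(s,\omega,\cdot,\psi)$ transforms by $\omega_s(\Delta_{V^\tru}(m))^{\pm2}$ along Levi orbits. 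This is visible in Proposition \ref{norm_ex}: the product $c(s,\omega,A,\psi)\,R(s,\omega,A,\psi)$ is $A$-independent while $R$ contains $\omega_s(N_V(A))^{-1}$. The non-invariance is essential to the lemma: a measure Jacobian contributes only powers of $|a|$, so if $c$ were Levi-invariant in $A$ the factor $\omega_{s-\frac12}(a)^{-N}$ could never arise. Relatedly, the Levi action on $\u(V^\tru)_{reg}$ is not transitive and $aA$ need not lie in the orbit of $A$; the twist $\chi_{\disc(\V)}(a)$ in the skew-hermitian case measures exactly this failure and comes from the local constants $\epsilon(\frac12,\chi_{\disc(A)},\psi)$ comparing different orbits, not from the parity of $\dim_F\u$. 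Your final paragraph---substituting $\psi_a$ into Proposition \ref{norm_ex} and using the standard transformation laws for $\gamma$ and $\epsilon$---is a complete proof on its own and is the paper's second suggested argument; I would promote it from consistency check to the actual proof, or else repair the equivariance computation as indicated.
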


\begin{proof}
We can prove this lemma as in \cite[Lemma 10]{LR05}. 
Note that we can also prove it directly from Proposition \ref{norm_ex}.
\end{proof}

%%%%%%%%%%%%%%%%%%%%%%%%%%%%%%%%%%%%%%%%%%%%%%%%%%%%%%%%%%%%%%%%%%%%%%%%%%%%%%

\section{
            Statement of the main theorem
            }\label{sm}

%%%%%%%%%%

\subsection{
                Definition of the $\gamma$-factor
                }
                 \label{defgamma}

We use the setting of \S\ref{qug}-\S\ref{nio}. Note that $D$ is possibly split.
Fix $A \in \u(V^\tru)_{reg}$. Then the image of $A$ is  $V^\tru$. Define $\varphi_A \in \End_D(V)$ so that the following diagram is commutative:
\[
\xymatrix{
              V^\trd \ar[r]^-{A} \ar[d]_p & V^\tru \ar[d]^p \\
                    V  \ar[r]_-{\varphi_A} & V 
              }
\]
where $p$ is the first projection of $V\times V$ to $V$. We define 
\begin{align}\label{N_V}
N_V(A) := N_V(\varphi_A), \ \ \disc(A) := (-1)^n N_V(A) \in F^\times/F^{\times 2}.
\end{align}
For $\disc \in F^\times/F^{\times 2}$, denote by $\chi_\disc$ the character $F^\times \rightarrow \C^\times: x \mapsto (x,\disc)_F$ where $(\cdot ,\cdot)_F$ is the Hilbert symbol of $F$.

\begin{df}\label{dfgm}
Let $\pi$ be an irreducible representation of $G$, let $\omega$ be a character of $F^\times$, and let $\psi$ be a non-trivial character of $F$. Then we define the $\gamma$-factor of $\pi$ by
\[
\gamma^\V(s+\frac{1}{2},\pi\times\omega,\psi)
           = \Gamma^\V(s,\pi,\omega,A,\psi) \cdot c_\pi(-1) \cdot R(s,\omega,A,\psi)
\]
where $\Gamma^\V(s,\pi,\omega,A,\psi)$ is the meromorphic function defined by \eqref{Ga*},  $R(s, \omega,A,\psi)$ is a meromorphic function defined in \S\ref{pnf}, and $c_\pi$ is the central character of $\pi$.
\end{df}

By Proposition \ref{norm_ex}, the factor $\gamma^\V(s,\pi\times\omega,\psi)$ does not depend on the choice of $A$.
Note that we interpret $N_V(A) = \disc(A) = 1, \disc(\V)=1$ and $\Gamma^\V(s,\pi,\omega,A,\pi) = 1$ when $n=0$. 

We can also define the $L$-factor and the  $\epsilon$-factor as in \cite[\S10]{LR05}. Note that Yamana gave another definition of the $L$-factor by g.c.d property and showed that both $L$-factors coincide \cite{Yam14}.

%%%%%%%%%%

\subsection{
                Lapid-Rallis $\gamma$-factors. 
                }\label{LRdf}

If $D$ is split, then $\dim_FV^\natural=2n$ and $h^\natural$ is either zero, symplectic form or symmetric form (see \S\ref{morita}). In this subsection, we consider the Lapid-Rallis $\gamma$-factor defined in \cite[\S9]{LR05}. We will prove that the Lapid-Rallis $\gamma$-factor coincides with the $\gamma$-factor defined in Definition \ref{dfgm} (see Theorem \ref{maingamma} \eqref{split} below). Note that we need to treat the case where ``$A$ is not split'' (for definition, see the end of \S5 in \cite{LR05}), since such $A$ may appear as a localization.

Let $F$ be a local field of characteristic $0$, and let $\V$ be a pair $(V,h)$ consisting of a  $2n$-dimensional vector space $V$ and a bilinear map $h:V\times V \rightarrow F$.
We assume that $h$ is either zero, a non-degenerate symplectic form, or a non-degenerate symmetric form. Let $G$ denote $\Isom(\V)$, let $\pi$ be an irreducible representation of $G$, let $\omega$ be a character of $F^\times$, and let $\psi$ be a non-trivial additive character of $F$. 
In this subsection, $\V^\Box$ denotes $(V\times V, h^\Box)$ where $h^\Box = h \oplus (-h)$, $V^\tru$ denotes a totally isotropic subspace $\{(v, v) \mid v \in V\}$ of $\V^\Box$, and $\u(V^\tru)$ denotes the Lie algebra of the unipotent radical of the parabolic subgroup of $\Isom(\V^\Box)$ corresponding $V^\tru$. We regard $\u(V^\tru)$ as a subspace of $\End_F(V^\tru)$ and we denote by $\u(V^\tru)_{reg}$ the set of $A \in \u(V^\tru)$ of rank $2n$. 
Take a basis $v_1, \ldots, v_{2n}$ of $V$. Then we define the \emph{discriminant} of $\V$ by
\[
\disc(\V) := (-1)^n \det( (h(v_i,v_j))_{ij}) \in F^\times/F^{\times 2}.
\]
We define $N_V(A)$ and $\disc(A)$ for $A \in \u(V^\tru)_{reg}$ as in \S\ref{defgamma}.
Then we define the Lapid-Rallis $\gamma$-factor by
\begin{align}\label{LRdef}
\gamma^{LR}(s+\frac{1}{2},\pi\times\omega,\psi) 
 = \Gamma^\V(s,\pi,\omega,A,\psi) \cdot c_{\pi}(-1) \cdot Q(s,\omega,A,\psi)
\end{align}
where
$\Gamma^\V(s,\pi,\omega,A,\psi)$ is the $\Gamma$-factor as defined in \cite[\S5]{LR05}, 
$c_\pi$ is the central character of $\pi$, and 
\[
Q(s,\omega,A,\psi) = \begin{cases}
    \omega_s(N_V(\frac{1}{2}A))^{-2} & \mbox{ in the linear case}, \\
    \omega_s(N_V(A))^{-1} \gamma(s+\frac{1}{2},\omega\chi_{\disc(A)},\psi)
      \epsilon(\frac{1}{2},\chi_{\disc(A)},\psi)^{-1} & \mbox{ in the symplectic case}, \\
    \omega_s(N_V(A))^{-1} \epsilon(\frac{1}{2}, \chi_{\disc(\V)},\psi) 
                                             & \mbox{ in the symmetric case}.
     \end{cases}
\]
By Proposition \ref{norm_ex}, The right hand side of \eqref{LRdef} does not depend on $A \in \u(V^\tru)_{reg}$.
Lapid and Rallis moreover defined the $L$-factor and the $\epsilon$-factor in \cite[\S10]{LR05}. We denote them by $L^{LR}(s,\pi\times\omega)$ and $\epsilon^{LR}(s,\pi\times\omega,\psi)$.

%%%%%%%%%%

\subsection{
            Some Remarks
           }\label{errors}

The definition \cite[(25)]{LR05} seems not to be correct, but this can be fixed as follows.

\begin{rem}
We note here the corrections in the linear case.
First, the factor $\pi(-1)$ of \cite[(25)]{LR05} should be replaced with $\pi \otimes\omega(-1)$
(see \cite[\S7.2]{Gan12}).
Second, the factor $\theta(\det_VA))^{-1}$ of \cite[(25)]{LR05} should be replaced with 
$\omega_s(\theta(\det_V(A))^{-1}$.
Consequently, the Lapid-Rallis $\gamma$-factor should be defined by
\[
\gamma^\V(s+\frac{1}{2},\pi\times\omega,\psi) = \Gamma^\V(s,\pi,\omega,A,\psi)
   \cdot c_{\pi\otimes\omega}(-1)\cdot Q(s,\omega,A)
\]
where $\Gamma^\V(s,\pi,\omega,A,\psi)$ is the $\Gamma$-factor as defined in \cite[\S5]{LR05}, $c_{\pi\otimes\omega}$ is the central character of $\pi\otimes\omega$ and
\[
Q(s,\omega,A,\psi) = \omega_s({\det}_V(A))^{-1}\omega_s(\theta({\det}_V(A)))^{-1}.
\]
Note that $\omega$ is a character of $E^\times$ where $E=F$ of $E$ is a quadratic extension of $F$.
\end{rem}

\begin{rem}\label{thirderror}
In the symmetric case, the symplectic case, and the hermitian case, the factor $\omega_s(\det_V(A))$ in \cite[(25)]{LR05} should be replaced with $\omega_s(\det_V(2A))$.
See Remark \ref{Lem8} for the necessity of this modification. Since
\[
{\det}_V(2A) = N_V(A)
\]
where $\det_V(\cdot)$ and $N_V(\cdot)$ are defined in \cite[p.337]{LR05} and the analogue of \eqref{N_V} respectively, 
our definition of the Lapid-Rallis $\gamma$-factor \eqref{LRdef} is consistent with this modification.
\end{rem}

\begin{rem}
In the case of hermitian spaces over a quadratic extension $E$ of $F$, the definition \cite[(25)]{LR05} needs to be modified more: let
\[
\epsilon(\V) = \chi((-1)^{n(n-1)/2} \det((h(v_i,v_j))_{ij}))
\] 
where $\chi$ is the non-trivial quadratic character of $F^\times/N_{E/F}(E^\times)$, 
and $(v_1,\ldots,v_n)$ is a basis of $V$ over $E$. Then as explained in \cite[\S10.1]{GI14}, the right hand side of \cite[(25)]{LR05} should be multiplied 
by $\epsilon(\V)^{n+1}$. Note that $\epsilon(\V)^{n+1} = 1$ when $n$ is odd.
Consequently, in the hermitian case over $E$, the Lapid-Rallis $\gamma$-factor should be defined by
\[
\gamma^\V(s+\frac{1}{2},\pi\times\omega,\psi) = \Gamma^\V(s,\pi,\omega,A,\psi)
   \cdot c_{\pi\otimes\omega}(-1)\cdot Q(s,\omega,A)
\] 
where $\Gamma^\V(s,\pi,\omega,A,\psi)$ is the $\Gamma$-factor as defined in \cite[\S5]{LR05}, $c_{\pi\otimes\omega}$ is 
the central character of $\pi\otimes\omega$ and 
\[
Q(s,\omega,A) = \epsilon(\V)^{n+1}{\det}_V(2A).
\] 
\end{rem}

\begin{rem}
Recall that in the hermitian case, the character $\psi_A$ of \cite[\S5]{LR05} is defined by 
$\psi_A(X) = \psi_F(\operatorname{tr}_{E/F}(\operatorname{tr}(XA)))$, but Gan-Ichino \cite[p539]{GI14} said that 
$\psi_A$ should be given by $\psi_A(X) = \psi_F(\operatorname{tr}(XA))$. Taking this into account, 
the definition of $\gamma$-factor stated in \cite[\S10.1]{GI14} is correct as stated in this case. 
However, in the other cases (Case B,C',C'',D), their definition \cite[\S10.1]{GI14} requires the same modification explained above.
\end{rem}

\begin{rem}
As with the Lapid-Rallis $\gamma$-factor, the $\gamma$-factor for metaplectic groups defined in \cite[\S5]{Gan12} needs to be modified. 
The factor $\det(A)$ appeared in the expression (which defines the $\gamma$-factor) in the fourth line from the bottom of p.76 should be replaced with $\det(2A)$. 
\end{rem}

%%%%%%%%%%

\subsection{
                Main theorem
                }
                \label{mainthm}
We use the setting of \S\ref{defgamma}.
For a non-trivial character $\psi$ of $F$ and an irreducible representation $\rho$ of $\GL_m(D)$, we can attach the ``Godement-Jacquet $\gamma$-factor'' as
\[
\gamma^{GJ}(s,\rho,\psi)
 = \varepsilon^{GJ}(s,\rho,\psi) \frac{L^{GJ}(1-s,\tilde{\rho})}{L^{GJ}(s,\rho)}
\]
where $\tilde{\rho}$ is the contragredient representation of $\rho$ and $L^{GJ}(s,-)$ (resp. $\varepsilon^{GJ}(s,-,\psi)$) is the Godement-Jacquet $L$-factor (resp. $\epsilon$-factor) (see \cite[Theorems 3.3, 8.7]{GJ72}).

\begin{thm}[Main]\label{maingamma}
The factor $\gamma^\V(s,\pi\times\omega,\psi)$ satisfies the following properties:
\begin{enumerate}
 \item (unramified twisting)
         \[
         \gamma^\V(s,\pi\times\omega_{s_0},\psi) = \gamma^\V(s+s_0,\pi\times\omega,\psi)
         \]\label{ut}
          for $s_0 \in \C$.
 \item (multiplicativity)
         Let $W$ be a totally isotropic subspace of $V$, and let $\sigma = \sigma_0 \otimes\sigma_1$ be an irreducible representation of $\GL(W) \times \Isom(\W_1)$ (see \S\ref{parabsub}). 
         If $\pi$ is a constituent of $\Ind_{P(W)}^G(\sigma)$, then
         \[
         \gamma^\V(s,\pi\times\omega,\psi) = \gamma^{\W_0}(s,\sigma_0\times\omega,\psi)
                                                             \gamma^{\W_1}(s,\sigma_1\times\omega,\psi).
         \] \label{ml}
 \item (split factor)
         If $D$ is split, then
         \[
         \gamma^\V(s,\pi\times\omega,\psi) 
                     = \gamma^{LR}(s,\pi^\natural\times\omega,\psi).
         \]
         \label{split}
 \item (functional equation)
         \[
         \gamma^\V(s,\pi\times\omega,\psi)
          \gamma^\V(1-s,\tilde{\pi}\times\omega^{-1},\psi^{-1}) = 1.
         \]\label{fe}
 \item (self duality)
         \[
         \gamma^\V(s,\tilde{\pi}\times\omega,\psi) = \gamma^\V(s,\pi\times\omega,\psi).
         \]\label{sd}
 \item (dependence on $\psi$)
         Denote by $\psi_a$ the additive character $x \mapsto \psi(ax)$ of $F$
         for $a \in F^\times$. Then
         \[
         \gamma^\V(s,\pi\times\omega,\psi_a)
               = T_N(s,\omega,a) \cdot \gamma^\V(s,\pi\times\omega,\psi)
         \]
        where $N$ and $T_N(s,\omega,a)$ are defined in \S\ref{pnf} \label{psi}
 \item (minimal cases)
          Suppose $F = \R$, $\V$ is $\epsilon$-hermitian, $n \leq 1, \omega=1$ and $\pi$ is trivial. 
          Let $\phi_\pi$ be the L-parameter of $\pi$. Then
          \[
          \gamma^\V(s,\pi\times\omega,\psi) = \gamma(s,\phi_\pi\otimes\omega,\std,\psi).
          \]
          Here, if $n=0$ we interpret the right hand side as $\gamma(s,\omega,\psi)$ (resp. $1$) when $\V$ is hermitian (resp. skew-hermitian). \label{min}
 \item ($\GL_n$-factor) 
          In the linear case, 
          \[
          \gamma^\V(s,\pi\times\omega,\psi) 
            = \gamma^{GJ}(s,\pi\otimes\omega,\psi)\gamma^{GJ}(s,\tilde{\pi}\otimes\omega,\psi).
          \] \label{GLn}
 \item (archimedean Langlands compatibility)
          If $F$ is archimedean then
          \[
          \gamma^\V(s,\pi\times\omega,\psi) = \gamma(s, \phi_\pi\otimes\omega,\std,\psi)
          \]
          where $\phi_\pi$ is the Langlands parameter corresponding to $\pi$
          and $\std$ is the standard representation of $^LG$ to $\GL_N(F)$. \label{arc}
 \item (global functional equation)
          Let $\F$ be a number field, and let $\D$ be a quaternion algebra over $\F$.
          Let $G = \Isom(\V)$ where $\V = (V,h)$ is a pair consisting of an $n$-dimensional 
          right vector space $V$ over $\D$ and either $0$, hermitian form 
          or skew-hermitian form $h$ on $V$. Let $\pi$ be an irreducible 
          cuspidal automorphic representation of $G(\A_\F)$. Then for any finite set $S$ of            
          places of $\F$ containing all places where $\D$ is not split, the functional equation
          \[ 
          L_S^{LR}(s,\pi\times\omega) 
                = \prod_{v \in S} \gamma_v^\V(s,\pi_v\times\omega_v,\psi_v)
               \cdot \epsilon_S^{LR}(s,\pi\times\omega,\psi) 
                        L_S^{LR}(1-s,\tilde{\pi}\times\omega^{-1})
          \]
          holds, where
          \[
            L_S^{LR}(s,\pi\times\omega) 
               = \prod_{v \not\in S} L^{LR}(s,\pi_v^\natural\times\omega_v)
          \]
          and
          \[
            \epsilon_S^{LR}(s,\pi\times\omega,\psi) 
               = \prod_{v \not \in S} \epsilon^{LR}(s,\pi_v^\natural\times\omega_v,\psi_v).
          \]
          \label{gfe}
\end{enumerate}
Moreover,  the properties \eqref{ut},\eqref{ml},\eqref{split},\eqref{psi},\eqref{min},\eqref{GLn} and \eqref{gfe} determine $\gamma^\V(s,\pi\times\omega,\psi)$ uniquely.
\end{thm}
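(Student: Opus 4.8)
The plan is to establish the ten properties of Theorem~\ref{maingamma} in turn, grouped by the method they require, and then to deduce the uniqueness assertion by a global argument. \emph{Formal and doubling-theoretic properties.} Property~\eqref{ut} is immediate from the definitions in \S\ref{dwf}--\S\ref{defgamma}: replacing $\omega$ by $\omega|\cdot|^{s_0}$ identifies $I(s,\omega|\cdot|^{s_0})$ with $I(s+s_0,\omega)$, so $M(s,\omega|\cdot|^{s_0})=M(s+s_0,\omega)$ and $\Gamma^\V(s,\pi,\omega|\cdot|^{s_0})=\Gamma^\V(s+s_0,\pi,\omega)$ by Theorem~\ref{zeta_int}; one then reads off from the explicit formulas for $c(s,\omega,A,\psi)$ and $R(s,\omega,A,\psi)$ in \S\ref{pnf} that both also shift by $s_0$, while $c_\pi$ is unchanged. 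Properties~\eqref{ml}, \eqref{fe}, \eqref{sd} and~\eqref{gfe} are the standard outputs of the doubling machinery, carried out in \S\ref{fp}: \eqref{fe} from the relation $M^*(-s,\omega^{-1},A,\psi^{-1})\circ M^*(s,\omega,A,\psi)=\Id$ together with the contribution $c_\pi(-1)$ coming from the intervention of the central element $-1$ of $G$ in the zeta integral; \eqref{sd} from the automorphism of $\V^\Box$ interchanging the two summands, combined with $\xi\leftrightarrow\tilde\xi$; \eqref{ml} from the compatibility of the doubling integral with parabolic induction --- decomposing $\V^\Box$ along the Levi $\GL(W)\times\Isom(\W_1)$ and recognizing the $\GL(W)$-contribution as a Godement--Jacquet integral --- where we incorporate the correction to the multiplicativity argument of \cite{LR05} noted in \S\ref{errors}; and~\eqref{gfe} from unfolding the global doubling integral against an Eisenstein series on $G^\Box(\A_\F)$ and invoking its functional equation, matching the local factors at $v\notin S$ with the definitions of $L^{LR}$ and $\epsilon^{LR}$.

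\emph{Split comparison, dependence on $\psi$, and the linear case.} For~\eqref{split} (this is \S\ref{sc}) we use Morita equivalence (\S\ref{morita}): when $D$ is split, $\iota_G$ transports the whole doubling setup for $\V$ to that for $\V^\natural$, and one checks $N_{V^\natural}(A^\natural)=N_V(A)$, $\disc(A^\natural)=\disc(A)$, $\disc(\V^\natural)=\disc(\V)$ and that the normalizing factors $c(s,\omega,A,\psi)$ agree, so $\Gamma^\V(s,\pi,\omega,A,\psi)=\Gamma^{\V^\natural}(s,\pi^\natural,\omega,A^\natural,\psi)$; comparing the formula for $R$ in \S\ref{pnf} with the formula for $Q$ in \S\ref{LRdf} case by case (linear $\leftrightarrow$ linear, hermitian $\leftrightarrow$ symplectic, skew-hermitian $\leftrightarrow$ symmetric) then yields $\gamma^\V(s,\pi\times\omega,\psi)=\gamma^{LR}(s,\pi^\natural\times\omega,\psi)$ in the sense of the (corrected) definition recalled in \S\ref{LRdf}. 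Property~\eqref{psi} follows from Lemma~\ref{whn1}: since $\Gamma^\V(s,\pi,\omega)$ does not involve $\psi$, one gets $\Gamma^\V(s,\pi,\omega,A,\psi_a)=T_N(s,\omega,a)\,\Gamma^\V(s,\pi,\omega,A,\psi)$, and the residual discrepancy between $R(s,\omega,A,\psi_a)$ and $R(s,\omega,A,\psi)$ is a routine computation with the scaling relations for $\gamma$- and $\epsilon$-factors under $\psi\mapsto\psi_a$, absorbed into the independence of $\gamma^\V$ of $A$. Property~\eqref{GLn} follows from the identification, in the linear case, of the doubling integral for $\GL_n(D)$ with the Godement--Jacquet integral, together with the linear case of Proposition~\ref{norm_ex}: unwinding Definition~\ref{dfgm} produces exactly $\gamma^{GJ}(s,\pi\otimes\omega,\psi)\gamma^{GJ}(s,\tilde\pi\otimes\omega,\psi)$, the two factors reflecting the decomposition of the standard representation of the relevant $L$-group as $\std\oplus\std^\vee$; cf.\ \cite{Yam14}.

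\emph{Minimal cases and archimedean Langlands compatibility.} For~\eqref{min}, when $n=0$ everything collapses to $\Gamma^\V=1$, $c_\pi(-1)=1$ and $R(s,1,A,\psi)=\gamma(s+\tfrac12,1,\psi)$ (hermitian) or $1$ (skew-hermitian), matching the stated interpretation; when $n=1$ over $\R$ with $D$ division (the split case being covered by~\eqref{split}), the doubled group $G^\Box=\Isom(\V^\Box)$ is a small real group whose degenerate principal series $I(s,\omega)$, intertwining operator $M(s,\omega)$ and doubling integral $Z^\V(f_s,\mathbf 1)$ against the trivial matrix coefficient are computed explicitly in \S\ref{calcu}, and comparison with the explicit $\gamma(s,\phi_\pi,\std,\psi)$ --- the parameter $\phi_\pi$ of the trivial representation being the principal one --- yields the claim. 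Property~\eqref{arc} is then obtained in \S\ref{relLLC} following \cite{LR05}: both sides satisfy multiplicativity (the left by~\eqref{ml}, the right because the archimedean local Langlands correspondence is compatible with parabolic induction), both satisfy the linear-case compatibility (the left by~\eqref{GLn} and $\gamma^{GJ}(s,\rho,\psi)=\gamma(s,\phi_\rho,\std,\psi)$ for general linear groups over $D$, the right by definition), and both satisfy split compatibility (the left by~\eqref{split}, which reduces us to Lapid--Rallis's own archimedean theorem for split classical groups); these reductions leave only the discrete series of the real quaternionic unitary groups and the representations of the anisotropic ones, for which the required $\Gamma$-factors are computed in \S\ref{calcu} and matched with the explicit Galois-side $\gamma$-factors. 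I expect these archimedean computations --- the analysis of the degenerate principal series and intertwining operators of the doubled quaternionic unitary groups over $\R$ and their identification with products of $\Gamma_\R$- and $\Gamma_\C$-factors --- to be the main obstacle.

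\emph{Uniqueness.} Suppose $\gamma_1$ and $\gamma_2$ both satisfy~\eqref{ut}, \eqref{ml}, \eqref{split}, \eqref{psi}, \eqref{min}, \eqref{GLn} and~\eqref{gfe}; we show they coincide (this is \S\ref{uniqprf}). By~\eqref{split} they agree at every place where $D$ is split, so we fix a place $v_0$ where $D_{v_0}$ is a division algebra. Using~\eqref{ut}, \eqref{ml}, \eqref{GLn} and induction on $\dim\V$, we reduce to the case where $\pi$ is not a constituent of any proper parabolic induction --- so $\pi$ is supercuspidal if $v_0$ is non-archimedean, and a discrete series (or a representation of an anisotropic group) if $v_0$ is real. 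We then globalize: choose a number field $\F$ with a place completing to $F$ and an auxiliary real place $u$, a quaternion algebra $\D/\F$ ramified exactly at $\{v_0,u\}$, a global $\epsilon$-hermitian space $\V$ over $\D$ with $\V_{v_0}$ the given local space and $\V_u$ of maximal Witt index, a Hecke character of $\A_\F^\times$ restricting to $\omega$ at $v_0$ and to the trivial character at $u$, and then --- by a standard globalization argument analogous to those of \cite{LR05, Gan12} --- an irreducible cuspidal automorphic representation $\Pi=\otimes_v\Pi_v$ of $\Isom(\V)(\A_\F)$ with $\Pi_{v_0}=\pi$, $\Pi_u$ trivial, and $\Pi_v$ unramified at all other finite places. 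Applying~\eqref{gfe} with $S=\{v_0,u\}$ to each of $\gamma_1,\gamma_2$, the factors $L_S^{LR}(s,\pi\times\omega)$, $\epsilon_S^{LR}$ and $L_S^{LR}(1-s,\tilde\pi\times\omega^{-1})$ depend only on the data at $v\notin S$, hence not on the choice between $\gamma_1$ and $\gamma_2$, so $\gamma_{1,v_0}\,\gamma_{1,u}=\gamma_{2,v_0}\,\gamma_{2,u}$ as meromorphic functions of $s$; and by~\eqref{ml}, \eqref{ut} and~\eqref{GLn} the factor $\gamma_u$ of the trivial representation reduces to the $n\le1$ case over $\R$ covered by~\eqref{min}, hence is the same for $\gamma_1$ and $\gamma_2$. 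Therefore $\gamma_{1,v_0}=\gamma_{2,v_0}$, and restoring arbitrary $\omega$ and $\psi$ via~\eqref{ut} and~\eqref{psi} gives $\gamma_1=\gamma_2$. The secondary difficulty here is the bookkeeping forced by the parity constraint on the ramification of $\D$ --- the need for the auxiliary place $u$, and for $\V_u$ to have maximal Witt index so that the reduction of $\gamma_u$ of the trivial representation terminates in~\eqref{min} --- together with the appeal to a globalization theorem with prescribed local components.
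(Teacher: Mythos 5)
Your uniqueness argument hinges on a globalization that is not available. You ask for a cuspidal automorphic representation $\Pi$ of $\Isom(\underline{\V})(\A_\F)$ with the exact component $\pi$ at $v_0$ \emph{and} the trivial representation at the auxiliary real place $u$, where $\underline{\V}_u$ is chosen of maximal Witt index. No standard globalization theorem prescribes two local components in this way, and for $n\ge 2$ the demand is in fact contradictory: with $\underline{\V}_u$ isotropic the group $\Isom(\underline{\V}_u)(\R)$ is non-compact, so in the hermitian case (where the group is simply connected) strong approximation relative to $u$ forces any automorphic form on which $G(\F_u)$ acts trivially to be constant, which is incompatible with $\Pi_{v_0}=\pi$ being a non-trivial supercuspidal representation; even when the group at $u$ is compact, prescribing the trivial representation there in addition to $\pi$ at $v_0$ goes beyond the results you invoke. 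The paper's argument in \S\ref{uniqprf} is structured precisely to avoid this: it globalizes only the component at $v_0$ (Henniart's appendix), accepts an \emph{uncontrolled} component at the second ramified real place, and then disposes of that unknown real component by a second round of reductions --- Casselman's subrepresentation theorem together with \eqref{ut}, \eqref{ml}, \eqref{GLn} brings it down to $n\le 1$, and a further application of the same global argument reduces to the trivial representation, which is exactly why \eqref{min} is stated only for $F=\R$, $n\le 1$, $\pi$ trivial. As written, your chain of reductions does not close.

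There is a second gap in your plan for \eqref{arc}. After your reductions you claim only ``discrete series of the real quaternionic unitary groups and representations of the anisotropic ones'' remain and that the required $\Gamma$-factors are ``computed in \S\ref{calcu}''. But in the hermitian case anisotropic spaces over $\H$ exist in every dimension, so one must handle \emph{every} finite-dimensional irreducible representation of the compact group $\Sp(n)$ for every $n$, whereas \S\ref{calcu} computes only the trivial representation (and the characters of the one-dimensional skew-hermitian group). The missing ingredient is the induction on the highest weight via ``strong adjacency'' and the Branson--\'{O}lafsson--{\O}rsted identity (Lemma \ref{eigen=gamma} and Proposition \ref{ind_step}), which turns the eigenvalue of the normalized intertwining operator on adjacent $K$-types into the recursion matching the Galois-side $\Gamma_\C$-factors; without this (or a substitute) your outline has no mechanism for general representations of $\Sp(n)$, and you yourself flag this as the expected obstacle. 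The remaining parts of your proposal --- the formal properties, the split comparison, \eqref{psi}, \eqref{GLn}, and the explicit $n\le 1$ integrals for \eqref{min} --- follow essentially the same route as the paper.
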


\begin{rem}
The $\gamma$-factor appears in a functional equation of the zeta integrals. Suppose that $\V$ is an $\epsilon$-hermitian space for simplicity. Let $f_s \in I(s,\omega)$, and let $\xi$ be a matrix coefficient of $\pi$. We can rewrite \eqref{Ga*} as
\[
Z(M(s,\omega)f_s,\xi) = c(s,\omega,A,\psi)\Gamma^\V(s,\omega,A,\psi)Z(f_s,\xi).
\]
We choose the Haar measure $du$ in the definition of $M(s,\omega)$ as in \S\ref{pnf}. Then, by Proposition \ref{norm_ex}, we can obtain the functional equation
\begin{align}\label{locFE_zeta}
\begin{aligned}
Z^\V(M(s,\omega)f_s, \xi) = 
& e(G) \omega_\pi(-1)\gamma^\V(s+\frac{1}{2}, \pi\times\omega,\psi)  \\
& \times |2|^{-2ns+n(n-\frac{1}{2})}\omega^{-n}(4) 
   \prod_{i=0}^{n-1}\gamma(2s-2i, \omega^2,\psi)^{-1} 
    Z^\V(f_s,\xi)
\end{aligned}
\end{align}
where $e(G)$ is the invariant of Kottwitz (see. \S\ref{pnf}).
\end{rem}

%%%%%%%%%%%%%%%%%%%%%%%%%%%%%%%%%%%  Proof  %%%%%%%%%%%%%%%%%%%%%%%%%%%%%%%%%%%%%

\section{
            Proof of the main theorem
            }\label{prf}

In \S6 and \S7, we prove Theorem \ref{maingamma}. Once \eqref{min} minimal cases is proved, then the other parts of the proof are not difficult or are similar to \cite[Theorem 4]{LR05}.
However, the proof of the uniqueness is important: it explains the reason why the minimal cases contains only the cases where $F=\R, n=0,1$, and $\pi, \omega$ are trivial. 
In this section, we write down the proof of the uniqueness, \eqref{split} split factor, and \eqref{arc} archimedean Langlands properties for the readers. 
The minimal cases will be proved in \S7.

%%%%%%%%%%%%%%%%%%%%%%%%%%%%%%      Uniqueness      %%%%%%%%%%%%%%%%%%%%%%%%%%%%%%%%

\subsection{
            Uniqueness
            }\label{uniqprf}

In this subsection, we prove the uniqueness of the $\gamma$-factor, which is stated at the end of Theorem \ref{maingamma}.  Let $\gamma'$ be another function satisfying the conditions \eqref{ut},\eqref{ml},\eqref{split},\eqref{psi},\eqref{min},\eqref{GLn} and \eqref{gfe} of Theorem \ref{maingamma}. Then we will prove the equation
\begin{align}\label{uniqgoal}
\gamma'(s,\pi\times\omega,\psi) = \gamma^\V(s,\pi\times\omega,\psi).
\end{align}

We denote the skew-field of Hamilton's quaternions by
\[
\H = \R\oplus \R i \oplus \R j \oplus \R k
\]
with the elements $i,j,k$ satisfying 
\[
i^2 = j^2 = k^2 = -1,\, k=ij,\, ij=-ji.
\]
We use the setting of \S\ref{defgamma}. Recall that $F$ is an arbitrary local field of characteristic zero, $D$ is a quaternion algebra over $F$, and $\V$ is an $n$-dimensional $\epsilon$-hermitian space over $D$. By the condition \eqref{ml} and \eqref{GLn}, we may assume that all the coefficients of $\pi$ have compact support. By the condition \eqref{split}, we may assume that $D$ is a division quaternion algebra. Then, we have the following:
\begin{lem}\label{garg1}
Let $\V'$ be an $\epsilon$-hermitian space over $\H$ such that $\dim \V' = n$.
Then there is a quintuple $(\F,\D,\underline{\V},\underline{\omega},\underline{\psi})$ where
\begin{itemize}
\item $\F$ is a number field such that there are two (different) places $v_1, v_2$ with $\F_{v_1} = F, \F_{v_2} = \R$,
\item $\D$ is a division quaternion algebra over $\F$ such that $\D$ is not split precisely at the two places $v_1,v_2$ and $\D_{v_1} = D, \D_{v_2} = \H$,
\item $\underline{\V}$ is an $\epsilon$-hermitian space over $\D$ such that $\underline{\V}_{v_1} = \V, \underline{\V}_{v_2} = \V'$
\item $\underline{\omega}$ is a Hecke character of $\F$ such that $\omega \cdot\underline{\omega}_{v_1}^{-1} = | \cdot |_{v_1}^t$ for some $t \in \C$ and $\underline{\omega}_{v_2} = 1$,
\item $\underline{\psi}$ is a non-trivial additive character of $\A/\F$ where $\A$ is the ring of adeles of $\F$.
\end{itemize}
\end{lem}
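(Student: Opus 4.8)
The plan is to construct the quintuple $(\F,\D,\underline{\V},\underline{\omega},\underline{\psi})$ by a sequence of standard but careful global-to-local approximation arguments, prescribing the local data at $v_1,v_2$ first and then controlling everything everywhere else. First I would build the number field $\F$ together with the two places $v_1,v_2$: if $F$ is already $\R$ or $\C$ one takes $\F=\Q$ (or $\Q(i)$) and finds two real (resp.\ one real and one complex, resp.\ two complex) places by a weak-approximation / primitive-element construction; if $F$ is non-archimedean of residue characteristic $p$, one takes $\F$ to be a suitable number field in which some prime $\p$ above $p$ has completion $\F_\p = F$ (e.g.\ by choosing $\F$ so that the local extension $F/\Q_p$ is realized, using Krasner's lemma to approximate the defining polynomial of $F/\Q_p$ by one over $\Q$), and then arranges a second real place $v_2$ by, if necessary, passing to a further extension that is ramified or split appropriately without disturbing $\F_\p$. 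The key point here is only that such $\F$ with the two prescribed completions exists, which is classical.

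Next I would produce $\D$: a quaternion algebra over a number field is determined by its (even, finite) set of ramified places together with the Hasse invariants, and by the Albert--Brauer--Hasse--Noether theorem \emph{any} finite set of places of even cardinality is the ramification set of some quaternion algebra. So I take $\D$ to be the (unique) quaternion algebra over $\F$ ramified exactly at $\{v_1,v_2\}$; since $\D_{v_1}$ and $\D_{v_2}$ are then the unique division quaternion algebras over $F$ and over $\R$ respectively, automatically $\D_{v_1}=D$ and $\D_{v_2}=\H$. Then I would construct $\underline{\V}$: an $\epsilon$-hermitian space over a quaternion algebra over a global field is classified by its dimension together with local invariants (the discriminant in $\F^\times/\F^{\times2}$ and, for hermitian forms over $\H$, the local Hasse/Witt-type invariants) subject to a product formula and a parity constraint, and the existence of a global $\epsilon$-hermitian space of dimension $n$ with prescribed completions $\underline{\V}_{v_1}=\V$, $\underline{\V}_{v_2}=\V'$ — and arbitrary (compatible) behavior elsewhere — follows from the Hasse principle for $\epsilon$-hermitian forms over quaternion algebras; one just needs to check the relevant local-global compatibility (sum of local invariants, discriminant) can be met, which is possible precisely because we have freedom at the infinitely many remaining places. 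For $\underline{\omega}$ I would use that a Hecke character of $\F$ with prescribed local component (up to an unramified twist) at one place and trivial at another exists by the same weak-approximation argument used to construct Hecke characters with given local data (extend the character $\omega|\cdot|^{-t}$ chosen so its restriction to the relevant local units extends, then fill in the idele class group). Finally $\underline{\psi}$ is any non-trivial additive character of $\A/\F$, which always exists, and one may further adjust it by scaling so that its component at $v_1$ matches $\psi$ up to the allowed flexibility (though as stated the lemma does not even require matching $\psi$ at $v_1$).

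The main obstacle I anticipate is not any single existence statement — each is a standard consequence of class field theory or the Hasse principle — but rather the bookkeeping needed to make all the prescriptions \emph{simultaneously} compatible: the ramification set of $\D$ must be even, the discriminant of $\underline{\V}$ must satisfy the product formula, the Hasse character of $\underline{\V}$ (in the hermitian-over-$\H$ case) must satisfy its own product formula, and $\underline{\omega}$ must be a genuine idele class character; moreover all of these must be met while the local data at $v_1$ and $v_2$ are frozen. The resolution is that we have imposed constraints at only two places while the remaining places (infinitely many, including all but finitely many non-archimedean ones where $\D$ is split and $\underline{\V}$ may be taken to be a standard split space) give more than enough room to balance every parity and product-formula condition; concretely one first fixes $v_1,v_2$, then chooses the invariants at the split places to absorb the defect. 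Once the quintuple is assembled, the lemma's assertions are immediate from the construction.
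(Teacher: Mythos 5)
Your proposal is correct and takes essentially the same route as the paper, whose entire proof is to declare the existence of $\F$, $\D$, and $\underline{\omega}$ well-known and to obtain $\underline{\V}$ by weak approximation. You have simply supplied the standard details (Krasner's lemma, the Albert--Brauer--Hasse--Noether classification of quaternion algebras by their even ramification set, and the Hasse-principle/product-formula bookkeeping for $\epsilon$-hermitian forms) that the paper leaves implicit.
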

\begin{proof}
The existence of such $\F, \D$, and $\omega$ is well-known. Besides, by using the weak approximation, we have an $\epsilon$-hermitian space $\underline{\V}$ satisfying the condition of the lemma.
\end{proof}
We apply this lemma to $\V'$ having a $\lfloor n/2 \rfloor$-dimensional totally isotropic subspace. By \cite[Appendice I]{Hen87}, there is an irreducible automorphic cuspidal representation $\Pi$ of $\Isom(\underline{\V})(\A)$ such that $\Pi_{v_1} = \pi$. Then, by the conditions \eqref{ut}, \eqref{split}, \eqref{psi}, and \eqref{gfe}, 
we have
\[
\gamma_{v_1}'(s,\Pi_{v_1}\times\underline{\omega}_{v_1},\underline{\psi}_{v_1})
\gamma_{v_2}'(s,\Pi_{v_2}\times\underline{\omega}_{v_2},\underline{\psi}_{v_2})
=\gamma_{v_1}^\V(s,\Pi_{v_1}\times\underline{\omega}_{v_1},\underline{\psi}_{v_1})
\gamma_{v_2}^\V(s,\Pi_{v_2}\times\underline{\omega}_{v_2},\underline{\psi}_{v_2}).
\]
Thus, we can reduce the equation \eqref{uniqgoal} to the case $F = \R$, $\omega=1$ and $\V$ has a $\lfloor n/2\rfloor$-dimensional totally isotropic subspace. By Casselman's embedding theorem (\cite[Corollary 5.2]{CO78}) and  the condition \eqref{ml}, we may assume that $n=0,1$. 

Again by the global argument as above, we may assume moreover that $\pi$ is trivial. In this case, the equation \eqref{uniqgoal} clearly holds by the condition \eqref{min}. 

%%%%%%%%%%%%%%%%%%%%%%%%%     Formal properties     %%%%%%%%%%%%%%%%%%%%%%%%%%%%%%%%

\subsection{
           Formal properties
           }\label{fp}

The properties \eqref{ut}, \eqref{ml}, \eqref{fe}, \eqref{sd}, \eqref{psi}, and \eqref{gfe} can be deduced from the framework of the doubling method. They can be proved as in \cite[\S9]{LR05}.
However we explain \eqref{ml} here to give the detail of Remark \ref{thirderror}.

We use the setting of \eqref{ml}.
We may assume that $\pi$ is a constituent of $\Ind_{P(W)}^{G^\Box}\sigma$. We denote by $\W$ the orthogonal sum $\W_0 \bot \W_1$ (\S\ref{parabsub}).
We define $I^\W(s,\omega)$ as in \S\ref{zeta} and we define an intertwining map
\[
\Psi(s,\omega): I^\V(s,\omega) \rightarrow \Ind_{P(W^\Box)}^{G^\Box}(I^\W(s,\omega)\otimes|\Delta_{\W:\V}|): f_s \mapsto (g \mapsto [\Psi(s,\omega)f_s]_g)
\]
as in \cite[Proposition 1]{LR05}. Fix $A \in \u(V^\tru)_{reg}$. We may assume $A(W^\Box) \subset W^\Box$. Then $A$ induces the following maps
\begin{itemize}
\item $A_0: W^\Box \rightarrow W^\Box$,
\item $A_1:(W^\bot/W)^\Box \rightarrow (W^\bot/W)^\Box$.
\end{itemize}
For $f_s' \in I^\W(s,\omega)$ and for
\[
(B,C) \in \u((W\bot (W^\bot/W))^\tru) = \u(W^\tru)\times \u((W^\bot/W)^\tru),
\]
we define $l_{\psi_{(B,C)}}^\W(f_s')$ as in \S\ref{dwf}. Then, we have the following:
\begin{prop}\label{lm}
Let $f_s \in I(s,\omega)$.
\begin{enumerate}
\item In the linear case, we have 
        \[
         l_{\psi_A}(f_s) = \int_{U(V^\trd)\cap P(W^\Box)\backslash U(V^\trd)}
                            l_{\psi_{(A_0,A_1)}}^\W([\Psi(s)f_s]_u)\psi_A(u) \:du.
        \]
\item In the $\epsilon$-hermitian case, we have  
        \[
          l_{\psi_A}(f_s) = \int_{U(V^\trd)\cap P(W^\Box)\backslash U(V^\trd)}
                            l_{\psi_{(2A_0,A_1)}}^\W([\Psi(s)f_s]_u)\psi_A(u) \:du.
        \]
\end{enumerate}
\end{prop}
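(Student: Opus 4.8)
The plan is to prove Proposition~\ref{lm} by a direct unfolding of the integral $\int_{U(V^\trd)}f_s(u)\psi_A(u)\,du$, along the lines of the multiplicativity argument in \cite[\S9]{LR05}; the only new feature is the normalization of the additive character in the $\epsilon$-hermitian case, which is exactly where the factor $2A_0$ appears. First I would recall from \cite[Proposition~1]{LR05} the precise shape of $\Psi(s,\omega)$: for $f_s\in I^\V(s,\omega)$ the section $[\Psi(s)f_s]_g\in I^\W(s,\omega)$ arises from $f_s$ by restriction along a system of representatives for $P(W^\Box)\backslash G^\Box$ adapted to the doubled group $\Isom(\W^\Box)$ of $\W=\W_0\bot\W_1$, together with an integration against $|\Delta_{\W:\V}|$ over the unipotent radical of $P(W^\Box)$. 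Consequently $l^\W_{\psi_{(B,C)}}([\Psi(s)f_s]_u)$ unwinds to an integral of (a translate of) $f_s$ over a copy of $U(\W^\trd)=U(W^\trd)\times U((W^\bot/W)^\trd)$ sitting inside $U(V^\trd)$, against the character $\psi_{(B,C)}$.

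The core of the argument is the iterated decomposition of the domain: $U(V^\trd)$ is the product of $U(V^\trd)\cap P(W^\Box)$ with a transversal, and $U(V^\trd)\cap P(W^\Box)$ is in turn the product of its unipotent-radical part $U(V^\trd)\cap U(W^\Box)$ with a subgroup mapping onto $U(\W^\trd)$. Inserting this into $l_{\psi_A}(f_s)$ and using that the hypothesis $A(W^\Box)\subset W^\Box$ forces $\psi_A$ to be trivial on $U(V^\trd)\cap U(W^\Box)$ (a short trace computation, since then $uA$ preserves $W^\Box$), the integration over the radical part is unobstructed and, by the definition of $\Psi(s,\omega)$, collapses into the section $[\Psi(s)f_s]_u$ together with the modulus $|\Delta_{\W:\V}|$. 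What remains is the integral over the copy of $U(\W^\trd)$ against the restriction of $\psi_A$, which is precisely $l^\W_{\psi_{(B,C)}}([\Psi(s)f_s]_u)$ for the character $(B,C)$ that $\psi_A$ restricts to. So everything reduces to identifying that restriction, i.e. to expressing $\Tr_{V^\Box}(\bar uA)$, for $\bar u$ in the copy of $U(\W^\trd)$, in terms of the maps $A_0$ and $A_1$ induced by $A$ on $W^\Box$ and on $(W^\bot/W)^\Box$.

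This last computation is where the two cases genuinely diverge, and it is the point I expect to be the main obstacle. In the linear case $U(V^\trd)$ has no symmetry constraint, the embedded copy of $U(\W^\trd)$ is block-diagonal, and since $\bar uA$ preserves $W^\Box$ the trace splits as $\Tr_{W^\Box}(\bar u_0A_0)+\Tr_{(W^\bot/W)^\Box}(\bar u_1A_1)$, giving $\psi_{(A_0,A_1)}$ with no extra factor. In the $\epsilon$-hermitian case, $U(V^\trd)$ is cut out by the condition ${}^t\!X^*=-\epsilon X$ of \S\ref{pnf}, so the block of $\bar u$ on $W^\Box$ is forced to come paired with a transpose block on a complementary totally isotropic subspace $W'$, identified with it via the perfect pairing $W'\cong W^\vee$ induced by $h^\Box$; when one computes $\Tr_{V^\Box}(\bar uA)$ this $W'$-block contributes a \emph{second} copy of $\Tr_{W^\Box}(\bar u_0A_0)$, the two being equal by virtue of $A^*=-A$, so that the $\W_0$-component of the character becomes $\psi_{2A_0}$; the $\W_1=(W^\bot/W,h)$-part lies in the core of $\V^\Box$, is not paired with anything outside it, and contributes $\psi_{A_1}$ unchanged. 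The real work is the sign-and-duality bookkeeping showing that the mirror contribution equals --- rather than, say, $\pm\epsilon$ times --- the original; this is elementary but delicate, and it is precisely the phenomenon behind the corrections $\det_V(2A)=N_V(A)$ recorded in \S\ref{errors} (see Remark~\ref{thirderror}). Once the character is pinned down the proposition follows, the convergence of the integrals being handled exactly as for $l_{\psi_A}$.
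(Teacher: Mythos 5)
Your proposal is correct and follows the same route the paper (implicitly) takes: Proposition \ref{lm} is stated there without a written proof, as the corrected form of \cite[Lemma 8]{LR05}, and the intended argument is exactly your unfolding of $l_{\psi_A}$ along $P(W^\Box)$ — outer integral over $U(V^\trd)\cap P(W^\Box)\backslash U(V^\trd)$, trivial contribution of $U(V^\trd)\cap U(W^\Box)$ because $A$ preserves the filtration by $W^\Box$ and $W^{\Box\bot}$, and identification of the restricted character on the embedded copy of $U(\W^\trd)$. In particular you have correctly located the source of the factor $2A_0$: the isometry condition forces each element of the embedded $U(W^\trd)$ to carry a mirror block on the dual isotropic complement of $W^\Box$, which contributes a second copy of $\Tr_{W^\Box}(\bar u_0A_0)$ to $\Tr_{V^\Box}(\bar uA)$, and this is precisely the point of Remark \ref{Lem8}.
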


\begin{rem}\label{Lem8}
Proposition \ref{lm} corrects the inaccuracy in the statement of \cite[Lemma 8]{LR05}. The functional $l_{\psi_B}^\W$ in \cite[Lemma 8]{LR05} corresponds to $l_{\psi_{(A_0,A_1)}}^\W$ in our notation. However, it should be replaced with $l_{\psi_{(2A_0, A_1)}}^\W$ in the symplectic case, the symmetric case, and the hermitian case. This causes the modification explained in Remark \ref{thirderror} of \S \ref{errors}.
\end{rem}

As in \cite[\S9]{LR05}, we have
\[
\Gamma^\V(s,\pi,\omega,A,\psi) = \Gamma^{\W_0}(s,\sigma_0,\omega,2A_0,\psi)
                                               \cdot \Gamma^{\W_1}(s,\sigma_1,\omega,A_1,\psi).
\]
Observe that 
\[
R^{\V}(s,\omega,A,\psi) = R^{\W_0}(s,\omega,2A_0,\psi) \cdot R^{\W_1}(s,\omega,A_1,\psi),
\]
thus we have
\[
\gamma^\V(s,\pi\times\omega,\psi) = \gamma^{\W_0}(s,\sigma_0\times\omega,\psi)
                                \cdot \gamma^{\W_1}(s,\sigma_1\times\omega,\psi),
\]
as desired.

%%%%%%%%%%%%%%%%%%%%%%%%%%%%%     Split cases    %%%%%%%%%%%%%%%%%%%%%%%%%%%%%%%%%%%%

\subsection{
           Split case
           }\label{sc}

In this subsection, we prove the property \eqref{split}. Suppose that $D$ is split. Take $A \in \u(V^\tru)_{reg}$. Then one can express $I(s,\omega), Z(-,\xi)$ and $l_{\psi_A}$ in the term of $V^\natural, \pi^\natural$ and $A^\natural$ where $A^\natural$ corresponds to $A$ via the Morita equivalence between $\End_F(V^{\Box\natural})$ and $\End_{D}(V^\Box)$ (\S\ref{morita}). Note that $V^{\Box\natural} = V^{\natural\Box}$ and $A^\natural\in\u(V^{\natural\tru})_{reg}$ (\S\ref{LRdf}).
Then we have 
\begin{align*}
\Gamma^{\V^\natural}(s, \omega, \pi^\natural, A^\natural, \psi)
 &= \Gamma^\V(s, \omega, \pi, A, \psi), \\
Q^{\V^\natural}(s,\omega,A^\natural,\psi) & = R^\V(s,\omega,A,\psi).
\end{align*}
Thus, we have the property \eqref{split}.

%%%%%%%%%%%%%%%%%%%%    Relation with LLC   %%%%%%%%%%%%%%%%%%%%%%%

\subsection{
            Relation with the local Langlands Correspondence.
            }\label{relLLC}

The properties \eqref{min}, \eqref{GLn}, and \eqref{arc} of Theorem \ref{maingamma} say that the equation \eqref{desi} is true as desired in special cases. 
Note that the property \eqref{GLn} is proved by Yamana \cite[Appendix]{Yam13}. 
The property \eqref{min} will be proved in \S7. In this subsection, we prove the property \eqref{arc} admitting the consequence of \S7. 
Moreover we consider the non-archimedean case (\S\ref{nonarc}). 

\subsubsection{
                Notations
                }\label{notations}

Before starting the proof, we give some notations involving the quaternionic unitary groups over archimedean local fields and their representations.
We use the setting of \S\ref{defgamma}.
Suppose that $F$ is archimedean. If $D$ is split over $F$, the equation \eqref{desi} is proved by Lapid and Rallis \cite{LR05}. Hence, we may assume $F=\R$ and $D=\H$. By unramifed twisting, we may assume that $\omega = 1$ or the sign character $\sgn$. Moreover, by the multiplicativity, we may assume that $\V = (\H^n, \langle I_n \rangle)$ (resp. $\V = (\H, \langle i \rangle)$) in the hermitian case (resp. the skew-hermitian case).
Here, 
\begin{align*}
&\langle I_n \rangle: \H^n \times \H^n \rightarrow \H: (x,y) \mapsto {}^t\!x^*\cdot y, \\
&\langle i \rangle: \H\times \H \rightarrow \H: (x,y) \mapsto x^*iy.
\end{align*}
Then $G$ is a subgroup of $\GL_n(\H)$.  
We choose a basis $e_1, \ldots, e_{2n}$ of $(\H^n)^\Box = \H^n \times \H^n$ defined by
\[
e_j = \begin{cases}
 \frac{1}{\sqrt{2}} (v_j,v_j) & 1 \leq j \leq n, \\ 
 \frac{1}{\sqrt{2}} (v_{j-n}, -v_{j-n}) & n+1 \leq j \leq 2n,
\end{cases}
\]
where $v_1 = {}^t\!(1 \ 0 \ \cdots \ 0 ), \ldots, v_n = {}^t\!(0 \ \cdots \ 0 \ 1)$. We may regard $G^\Box \subset \GL_{2n}(\H)$ by this basis. Put
\[
K = \{ g \in G^\Box \mid g\cdot {}^t\!g^* = 1 \}.
\]
Then $K$ is a maximal compact subgroup of $G^\Box$, and the embedding 
\[
G\times G \rightarrow K : (a,b) \mapsto w_0 \begin{pmatrix} a & 0 \\ 0 & b \end{pmatrix} w_0^{-1}
\]
is an isomorphism. Here
\[
w_0 := \frac{1}{\sqrt{2}}\begin{pmatrix} I_n & I_n \\ I_n & -I_n \end{pmatrix}.
\]
Moreover, we have the decomposition $G^\Box = P(V^\tru)K$ by the analogue of \cite[Lemma 2.1]{GPSR87}.

In the later part of this subsection, we explain the finite dimensional representations of the Weil group $W_\R$ of $\R$. 
We regard $\C$ as a subfield of $\H$ by identifying $i\in \H$ with the imaginary unit of $\C$.
The Weil group $W_\R$ of $\R$ is given by $\C^\times \cup j \C^\times \subset \H^\times$. For any character $\omega'$ of $\R^\times$, we also denote by $\omega'$ the one-dimensional representation of $W_\R$ defined by the composition $\omega' \circ \alpha$ where $\alpha: W_\R \rightarrow \R^\times$ is the homomorphism defined by 
\[
\alpha(j)= -1, \ \ \alpha(z) = z \overline{z} \ \mbox{ for } z \in \C^\times.
\]
For $l \in \Z$, we denote by $D_l$ the two-dimensional representation of $W_{\R}$ defined by
\[
D_l(j) = \begin{pmatrix} 0 & (-1)^l \\ 1 & 0 \end{pmatrix}, \,
D_l( re^{i\theta}) = \begin{pmatrix} e^{il\theta} & 0 \\ 0 & e^{-il\theta} \end{pmatrix} \mbox{ for $r \in \R_{>0}, \theta \in \R$}. 
\]
Then $D_l \cong D_{-l}$ and $D_l\otimes\sgn \cong D_l$ for $l \in \Z$. Note that all finite dimensional representations of $W_\R$ are completely reducible, and the finite dimensional irreducible representations of $W_\R$ are $1$, $\sgn$, $D_l$ for $l = 1,2, \ldots$, and their unramified twistings (cf. \cite[\S3]{Kna94}). Take the non-trivial additive character $\psi$ given by $\psi(x) = e^{2\pi i x}$ for $x \in \R$.
Put
\[
\Gamma_\R(s) := \pi^{-s/2}\Gamma(s/2), \ \ \Gamma_\C(s) := 2(2\pi)^{-s}\Gamma(s).
\]
Then, the $\gamma$-factors of $1, \sgn$ and $D_l$ for $l \in \Z$ are given by
\[
\gamma(s+\frac{1}{2},1,\std,\psi) 
= \frac{\Gamma_\R(-s+\frac{1}{2})}{\Gamma_\R(s+\frac{1}{2})}, \, 
\gamma(s+\frac{1}{2},\sgn,\std,\psi) 
= i \cdot \frac{\Gamma_\R(-s+\frac{3}{2})}{\Gamma_\R(s+\frac{3}{2})}
\] 
and
\[
\gamma(s+\frac{1}{2},D_l, \std, \psi) 
= i^{|l|+1} \cdot \frac{\Gamma_\C(-s+\frac{|l| + 1}{2})}{\Gamma_\C(s+\frac{|l|+1}{2})}.
\]

In what follows, we first describe the $\gamma$-factor of the representation $\std\circ\phi_\pi$ of $W_\R$. 

%%%%%%%%%%

\subsubsection{
                Hermitian case
                }

We use the setting of \S\ref{notations}. Suppose that $\V$ is hermitian. Recall that $\omega=1$ or $\sgn$. In this case, $G = \Sp(n)$. Choose the maximal torus
\[
T = \{ \diag(z_1,\ldots,z_n) \in \Sp(n) \mid z_1,\ldots,z_n \in \C^\times, |z_1| = \cdots = |z_n|=1\}
\]
of $\Sp(n)$. We identify the character group $X^*(T)$ of $T$ with $\Z^n$. We fix the ``standard'' positive system. We denote by $\rho = (\rho_1, \ldots,\rho_n)$ the half sum of the positive roots of $G$. Then $\rho _j =n + 1 - j$. Let 
\[
\lambda = (\lambda_1,\ldots,\lambda_n) \in \Z^n, \ \ \lambda_1 \geq \cdots \geq \lambda_n \geq 0
\]
be the highest weight of $\pi$. Then 
\begin{align*}
 \std \circ (\phi_\pi\otimes\omega) 
           &= ((\bigoplus_{j=1}^nD_{2(\lambda_j + \rho_j)}) \oplus \sgn^n) \otimes \omega \\
           &= (\bigoplus_{j=1}^nD_{2(\lambda_j+\rho_j)}) \oplus (\sgn^n\otimes\omega))
\end{align*}
where $\sgn^n$ is the character of $W_\R$ defined by $\sgn^n(j)= (-1)^n, \sgn^n(z) = 1$ for $z \in \C^\times$.
Since
\[
\prod_{j=1}^n i^{2(\lambda_j+\rho_j)+1} = i^{n(n+2)} \prod_{j=1}^n (-1)^{\lambda_j} 
=\, \epsilon(\frac{1}{2},\sgn^n,\psi)^{-1}c_\pi(-1),
\]
we have
\begin{align*}
       & \gamma(s+\frac{1}{2},\phi_\pi\otimes\omega,\std,\psi) \\ 
\nonumber &=c_\pi(-1)\gamma(s+\frac{1}{2},\sgn^n\otimes\omega,\psi)
                    \epsilon(\frac{1}{2},\sgn^n,\psi)^{-1}
                     \prod_{j=1}^n \frac{\Gamma_\C(-s+\frac{1}{2}+\lambda_j + \rho_j)}
                                              {\Gamma_\C(s+\frac{1}{2}+\lambda_j + \rho_j)}.
\end{align*}

We next compute $\gamma^\V(s,\pi\times\omega,\psi)$. 
Observe that 
\[I(s,\omega) = I(s,1), \ \ \Gamma^\V(s,\pi,\omega,A,\psi) = \Gamma^\V(s, \pi, 1, A, \psi), \ \  \omega_s(A) = |N_V(A)|^s
\]
for $A \in \u(V^\tru)_{reg}$. Hence, if we prove the equation
\begin{align}\label{herher}
\Gamma^\V(s,\pi,1,A,\psi) = |N_V(A)|^s \prod_{j=1}^n \frac{\Gamma_\C(-s+\frac{1}{2}+\lambda_j + \rho_j)}
                                              {\Gamma_\C(s+\frac{1}{2}+\lambda_j + \rho_j)},
\end{align}
then we can conclude \eqref{desi}. We prove \eqref{herher} by induction on
\[
|\lambda| := \sum_{i=1}^v|\lambda_i|
\]
by using the ``strong adjacency'' (see \cite[p. 347]{LR05}).

We first explain the ``strong adjacency''.
Since $G^\Box = P(V^\tru)K$, letting $\hat{G}$ the set of the equivalence classes of the irreducible representations of $G$, we have the decomposition
\[
 I(0,1) = \bigoplus_{\pi\in \hat{G}} \tilde{\pi} \otimes \pi
\] 
as $G \times G$-modules. We define $i_\pi^{(s)}$ by the composition
\[
\tilde{\pi}\otimes\pi \hookrightarrow I(0,1) \rightarrow I(s,1)
\]
where $I(0,1) \rightarrow I(s,1)$ is the map $f \mapsto f_s$ (see \S\ref{zeta}).
Let $\s$ be the orthogonal complement of $\k$ in $\g^\Box$ with respect to the Killing form $\kappa$. Then $\s\otimes_\R\C$ is isomorphic to $St \otimes \tilde{St}$ as a representation of $K\cong G\times G$, where $St$ is the representation of $G$ defined by the action
\[
 g \cdot x = (g\otimes1)^\natural x
\]
for $g \in \Sp(n), x \in (\H^n \otimes_\R\C)^\natural$ (for the definition of $\natural$, see \S\ref{morita}). We identify $\s\otimes_\R\C$ with a subrepresentation of $I(0,1)$ through the map $\omega^{(\nu)}$ which is defined in \cite[1.f]{BOO96}.

\begin{df}
Let $\pi,\pi'$ be irreducible representations of $G$. We say that $\pi$ and $\pi'$ are \emph{strongly adjacent} if the image of $(\s\otimes_\R\C)\otimes(\pi\otimes\tilde{\pi})$ of the multiplication map
\[
 I(0,1) \otimes I(s,1) \rightarrow I(s,1) : f' \otimes f_s  \mapsto f'f_s
\]
contains $\pi'\otimes\tilde{\pi}'$.
\end{df}

If $\pi$ and $\pi'$ are strongly adjacent, then
\begin{align}\label{eo}
\mbox{either } \Hom_G(St\otimes \pi,\pi') \mbox{ or } \Hom_G(\pi, St\otimes\pi') \mbox{ is non-zero }.
\end{align}
Conversely, one can show that the 
 \eqref{eo} implies the strongly adjacency. By the branching rule of $St\otimes \pi$ (see the end of \cite[\S2.5]{KT87}), we have the following lemma.

\begin{lem}
Let $\pi,\pi'$ be irreducible representations of $G$. 
We denote by $\lambda = (\lambda_1,\ldots,\lambda_n)$ (resp. $\lambda' = (\lambda_1',\ldots,\lambda_n'))$ the highest weight of $\pi$ (resp. $\pi'$).
Then the following are equivalent:
\begin{enumerate}
\item $\pi$ and $\pi'$ are strongly adjacent,
\item there exists a unique integer $l $ with $1 \leq l \leq n$ such that
\[
|\lambda_j - \lambda_j'| = 
\begin{cases}
   1       & j=l \\
   0      & j\not=l
\end{cases}
\]
for $j = 1, \ldots, n$.
\end{enumerate}
\end{lem}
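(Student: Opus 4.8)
The plan is to reduce the statement, via the equivalence ``strongly adjacent $\Longleftrightarrow$ \eqref{eo}'' recorded just above, to the classical branching rule for the tensor product of an irreducible representation of $\Sp(n)$ with its defining representation. First I would observe that, since $G=\Sp(n)$ is compact, every finite dimensional representation is completely reducible, so $\Hom_G(St\otimes\pi,\pi')\neq 0$ exactly when $\pi'$ is a constituent of $St\otimes\pi$, and $\Hom_G(\pi,St\otimes\pi')\neq 0$ exactly when $\pi$ is a constituent of $St\otimes\pi'$. Hence, by \eqref{eo}, $\pi$ and $\pi'$ are strongly adjacent if and only if $\pi'$ occurs in $St\otimes\pi$ or $\pi$ occurs in $St\otimes\pi'$.

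Next I would identify $St$ explicitly: unwinding the Morita description recalled above, $St$ is the standard $2n$-dimensional representation of $\Sp(n)\subset\GL_n(\H)\subset\GL_{2n}(\C)$, whose $T$-weights are $\pm e_1,\dots,\pm e_n$, each of multiplicity one, with highest weight $e_1=(1,0,\dots,0)$; in particular it is minuscule. Applying the branching rule for $St\otimes\pi_\mu$ from the end of \cite[\S2.5]{KT87} (equivalently, the minuscule tensor product formula), one obtains, for any dominant $\mu$ and writing $\pi_\mu$ for the irreducible representation of highest weight $\mu$,
\[
St\otimes\pi_\mu\;\cong\;\bigoplus_{j:\,\mu+e_j\text{ dominant}}\pi_{\mu+e_j}\;\oplus\;\bigoplus_{j:\,\mu-e_j\text{ dominant}}\pi_{\mu-e_j},
\]
the precise multiplicities being irrelevant here. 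Consequently $\pi'=\pi_{\lambda'}$ is a constituent of $St\otimes\pi_\lambda$ if and only if $\lambda'=\lambda\pm e_l$ for some $l\in\{1,\dots,n\}$: the required dominance of $\lambda'$ is automatic, since $\lambda'$ is the highest weight of the irreducible representation $\pi'$, and conversely any dominant $\lambda\pm e_l$ really does occur because $\pm e_l$ is a weight of $St$. By symmetry, $\pi=\pi_\lambda$ occurs in $St\otimes\pi_{\lambda'}$ if and only if $\lambda=\lambda'\pm e_l$, i.e. again $\lambda'-\lambda=\pm e_l$, for some $l$.

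Combining these, $\pi$ and $\pi'$ are strongly adjacent precisely when $\lambda'-\lambda=\pm e_l$ for a (necessarily unique) index $l$, which is exactly condition (2) of the statement, and this proves the equivalence. I expect the only substantive ingredient to be the branching rule of the second paragraph, which we simply cite from \cite{KT87}; the rest is purely formal, the one point to be careful about being that dominance of $\lambda\pm e_l$ need not be checked by hand on the side where that weight is the highest weight of the given irreducible representation.
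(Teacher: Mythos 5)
Your proposal is correct and follows exactly the route the paper intends: it combines the equivalence of strong adjacency with \eqref{eo} (stated just before the lemma) with the branching rule for $St\otimes\pi$ from the end of \cite[\S2.5]{KT87}, identifying $St$ as the standard $2n$-dimensional (minuscule) representation of $\Sp(n)$ so that the constituents of $St\otimes\pi_\mu$ are precisely the $\pi_{\mu\pm e_j}$ with dominant highest weight. The paper gives no further detail, so your write-up simply fills in the same argument.
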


Now we prove the property \eqref{arc} admitting \eqref{herher} for the trivial representation.
\begin{prop}\label{ind_step}
Let $\pi,\pi'$ be irreducible representations of $G$ of highest weights $\lambda, \lambda' \in \Z^n$, respectively. 
Suppose $\pi$ and $\pi'$ are strongly adjacent, and moreover $\lambda_l' =\lambda_l+1$ for some unique $l$. Then
\[
(\lambda_l + \rho_l +\frac{1}{2} -s) \Gamma^\V(s,\pi,1,A,\psi)
  =  (\lambda_l + \rho_l + \frac{1}{2} + s) \Gamma^\V(s,\pi',1,A,\psi).
\]
\end{prop}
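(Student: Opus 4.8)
The plan is to run the archimedean $K$-type recursion of Lapid--Rallis, with the decisive scalar furnished by the spectrum-generating operators of \cite{BOO96}. First I would pass to the Knapp--Stein intertwining operator $M(s,1)\colon I(s,1)\to I(-s,1)$: using $I(0,1)=\bigoplus_{\pi\in\hat{G}}\tilde\pi\otimes\pi$, its $(G\times G)$-equivariance and Schur's lemma, there is a meromorphic function $\mu_\pi(s)$ with $M(s,1)\circ i_\pi^{(s)}=\mu_\pi(s)\,i_\pi^{(-s)}$; substituting $f_s=i_\pi^{(s)}(v)$ into the functional equation \eqref{Ga*} writes $\Gamma^\V(s,\pi,1,A,\psi)$ as $c(s,1,A,\psi)^{-1}\mu_\pi(s)$ times a ratio of doubling zeta integrals. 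As in \cite[\S7]{LR05}, the zeta-integral part is transported along the strong-adjacency ladder by the same mechanism as $\mu_\pi(s)$ and cancels upon comparing $\pi$ with $\pi'$, so that Proposition~\ref{ind_step} reduces to the ratio identity
\[
\frac{\mu_{\pi'}(s)}{\mu_\pi(s)}=\frac{\lambda_l+\rho_l+\frac{1}{2}-s}{\lambda_l+\rho_l+\frac{1}{2}+s}.
\]

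For this ratio identity I would use strong adjacency. Because $St$ is minuscule for $G=\Sp(n)$, the branching rule recorded above shows $\pi'$ (with $\lambda'_l=\lambda_l+1$) occurs in $St\otimes\pi$ with multiplicity one; together with the self-duality of $St$ and of $\pi$, the module $\tilde\pi'\otimes\pi'$ occurs with multiplicity one in $(\s\otimes_\R\C)\otimes(\tilde\pi\otimes\pi)\cong(St\otimes\tilde{St})\otimes(\tilde\pi\otimes\pi)$. Hence the $(G\times G)$-equivariant maps
\[
\Phi_{\pm s}\colon (\s\otimes_\R\C)\otimes i_\pi^{(\pm s)}(\tilde\pi\otimes\pi)\longrightarrow i_{\pi'}^{(\pm s)}(\tilde\pi'\otimes\pi'),
\]
obtained from the multiplication map $I(0,1)\otimes I(\pm s,1)\to I(\pm s,1)$ followed by projection to the $(\tilde\pi'\otimes\pi')$-isotypic subspace, are nonzero and unique up to scalar. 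Since $M(s,1)$ is $(G\times G)$-equivariant, $M(s,1)\circ\Phi_s$ and $\Phi_{-s}\circ(\mathrm{id}\otimes M(s,1))$ both lie in the one-dimensional $\Hom$-space out of $(\s\otimes_\R\C)\otimes i_\pi^{(s)}(\tilde\pi\otimes\pi)$, so $M(s,1)\circ\Phi_s=\kappa(s)\,\Phi_{-s}\circ(\mathrm{id}\otimes M(s,1))$ for a meromorphic scalar $\kappa(s)$. Evaluating on $X_0\otimes i_\pi^{(s)}(v)$ for any $X_0\in\s\otimes_\R\C$ with $\Phi_s(X_0\otimes-)\neq 0$, using $M(s,1)i_\pi^{(s)}(v)=\mu_\pi(s)i_\pi^{(-s)}(v)$ and the fact that $\Phi_s,\Phi_{-s}$ agree on $K$ (where $I(s,1)$ and $I(-s,1)$ are canonically identified), gives $\mu_{\pi'}(s)=\kappa(s)\mu_\pi(s)$; so it remains to evaluate $\kappa(s)=(\lambda_l+\rho_l+\frac{1}{2}-s)/(\lambda_l+\rho_l+\frac{1}{2}+s)$.

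The main obstacle is precisely this evaluation of $\kappa(s)$ --- the ``eigenvalue'' of the spectrum-generating operator against the intertwining operator. I would compute it by realizing $M(s,1)$ as the integral over $U(V^\tru)$, commuting past that integral the spectrum-generating element built from the embedding $\s\otimes_\R\C\cong St\otimes\tilde{St}\hookrightarrow I(0,1)$ via $\omega^{(\nu)}$ of \cite[1.f]{BOO96}, and reading off the resulting scalar with the $K$-type calculus of \cite{BOO96} and the identification $\rho_j=n+1-j$ of the half-sum of the positive roots of $\Sp(n)$. One must check that only the single raising direction indexed by $l$ contributes, and that the shift by $\frac{1}{2}$ appears with the correct sign --- this is the very $\frac{1}{2}$ responsible for the summand $\sgn^n$ in $\std\circ\phi_\pi$ and for the factor $R(s,\omega,A,\psi)$, so it must be tracked carefully. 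The remaining points are routine: strong adjacency guarantees $\Phi_s\neq 0$ (so $\mu_{\pi'}(s)=\kappa(s)\mu_\pi(s)$ is not vacuous); $c(s,1,A,\psi)$ is $\pi$-independent; and in applying the Proposition inductively one takes $l$ to be the last index with $\lambda_l<\lambda'_l$, so that both $\pi$ and $\pi'$ are genuine representations of $\Sp(n)$, completing the induction on $|\lambda|$ once \eqref{herher} is known for the trivial representation.
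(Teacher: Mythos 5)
Your proposal follows essentially the same route as the paper: the identification of $\Gamma^\V(s,\pi,1,A,\psi)$ with the eigenvalue of the normalized intertwining operator on the $\tilde{\pi}\otimes\pi$-isotypic component is exactly Lemma \ref{eigen=gamma} (where the ratio of zeta integrals you worry about is identically $1$, since $G=\Sp(n)$ is compact, so $(g,1)\in K$ and $Z(f_s,\xi)$ does not depend on $s$), and the eigenvalue recursion across strongly adjacent $K$-types via spectrum-generating operators is what the paper obtains by invoking \cite[(2.14)]{BOO96} as in \cite[\S9]{LR05}. The evaluation of your scalar $\kappa(s)$, which you rightly flag as the crux, is precisely the content of that citation, so your outline matches the paper's proof step for step.
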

To prove this, we first note the following lemma:
\begin{lem}\label{eigen=gamma}
Let $\pi \in \hat{G}$, and let $A \in \u(V^\tru)_{reg}$. For $s \in \C$, the following diagram is commutative:
\[
\xymatrix{
             I(s,1) \ar[rr]^-{M^*(s,1,A,\psi)} & & I(-s,1) \\
             \tilde{\pi}\otimes\pi \ar[u]^{i_\pi^{(s)}} \ar[rr]_-{\Gamma^\V(s,\pi,1,A,\psi)\cdot}
             & & \tilde{\pi}\otimes\pi \ar[u]_{i_\pi^{(-s)}}
             }.
\]
\end{lem}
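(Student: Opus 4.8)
The plan is to combine the multiplicity-one decomposition of the degenerate principal series restricted to $K\cong G\times G$ with the defining functional equation \eqref{Ga*} of the normalized $\Gamma$-factor. Since $\V$ is hermitian, $G=\Sp(n)$ is compact. Under the identification $K\cong G\times G$ of \S\ref{notations}, the map $f\mapsto f_s$ from $I(0,1)$ to $I(s,1)$ is multiplication by the right-$K$-invariant function $|\Delta_{V^\tru}|^s$, hence restricts to the identity on $K$; thus, as a $G\times G$-module, $I(s,1)$ is isomorphic to $I(0,1)\cong\bigoplus_{\pi\in\hat G}\tilde\pi\otimes\pi$ for every $s\in\C$, and this decomposition is multiplicity-free. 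In particular $\Hom_{G\times G}(\tilde\pi\otimes\pi,I(\pm s,1))$ is one-dimensional, spanned by $i_\pi^{(\pm s)}$. Now $M^*(s,1,A,\psi)$ is $G^\Box$-equivariant, hence $G\times G$-equivariant, so $M^*(s,1,A,\psi)\circ i_\pi^{(s)}$ is a $G\times G$-map $\tilde\pi\otimes\pi\to I(-s,1)$ and therefore equals $\mu(s)\cdot i_\pi^{(-s)}$ for a scalar $\mu(s)$, meromorphic in $s$. The assertion of the lemma is precisely that $\mu(s)=\Gamma^\V(s,\pi,1,A,\psi)$.

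To evaluate $\mu(s)$ I would test against zeta integrals. Fix $v\in\pi$, $\check v\in\tilde\pi$ and set $f^{(s)}=i_\pi^{(s)}(\check v\otimes v)\in I(s,1)$. Since $f^{(s)}=f^{(0)}\cdot|\Delta_{V^\tru}|^s$ and $|\Delta_{V^\tru}|\equiv 1$ on $K$, the sections $f^{(s)}$ and $f^{(-s)}$ agree on $K$, and as $Z^\V(\cdot,\xi)=\int_G(\cdot)((g,1))\,\xi(g)\,dg$ depends only on the values on $G\times\{1\}\subset K$, we get $Z^\V(f^{(s)},\xi)=Z^\V(f^{(-s)},\xi)$ for every matrix coefficient $\xi$ of $\pi$. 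Applying \eqref{Ga*} to $f^{(s)}$ gives $Z^\V(M^*(s,1,A,\psi)f^{(s)},\xi)=\Gamma^\V(s,\pi,1,A,\psi)\,Z^\V(f^{(s)},\xi)$, while the left-hand side equals $\mu(s)\,Z^\V(i_\pi^{(-s)}(\check v\otimes v),\xi)=\mu(s)\,Z^\V(f^{(s)},\xi)$. Thus it suffices to exhibit $v,\check v,\xi$ with $Z^\V(f^{(s)},\xi)\neq 0$: the restriction of $f^{(0)}$ to $G\times\{1\}$ is, up to a nonzero constant depending on the normalization of $i_\pi^{(0)}$, a matrix coefficient of $\pi$, and pairing it against a suitable matrix coefficient of $\pi$ and invoking Schur orthogonality on the compact group $G$ yields a nonzero number. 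Comparing the two expressions gives $\mu(s)=\Gamma^\V(s,\pi,1,A,\psi)$, i.e. the commutativity of the diagram.

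The point requiring care is the nonvanishing in the last step, equivalently the statement that $Z^\V(\cdot,\xi)$ restricts to a nonzero pairing between the $\tilde\pi\otimes\pi$-isotypic component of $I(s,1)$ and the matrix coefficients of $\pi$; this is the ``basic identity'' of the doubling method specialized to the present compact situation, including bookkeeping of which of $\pi$, $\tilde\pi$ occurs in the relevant isotypic component, and is where the genuine content lies. One also uses that $\mu(s)$ is meromorphic, so the identity, first established on the region of absolute convergence as an identity of meromorphic functions, extends to all of $\C$. Apart from this the argument is formal and follows the approach of \cite{LR05}.
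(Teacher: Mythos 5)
Your argument is correct and is essentially the paper's own proof: multiplicity-one for $I(s,1)\cong\bigoplus_{\pi\in\hat G}\tilde\pi\otimes\pi$ as a $K\cong G\times G$-module forces $M^*(s,1,A,\psi)\circ i_\pi^{(s)}=\mu(s)\,i_\pi^{(-s)}$, and the identity $Z^\V(f_{-s},\xi)=Z^\V(f_s,\xi)\neq 0$ combined with \eqref{Ga*} pins down $\mu(s)=\Gamma^\V(s,\pi,1,A,\psi)$. The only difference is that the paper asserts the nonvanishing $Z^\V(f_s,\xi)\neq 0$ without comment, whereas you correctly flag it and justify it by Schur orthogonality on the compact group $G$.
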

\begin{proof}
The normalized intertwining operator $M^*(s,1,A,\psi)$ acts on $\tilde{\pi}\times\pi$ as the multiplication by a scalar for almost all $s \in \C$. We denote the scalar by $b$. 
On the other hand, we have $Z(f_{-s},\xi) = Z(f_s, \xi) \not=0$ for $f \in \tilde{\pi}\otimes\pi \subset I(0,1)$ with $f\not=0$. Hence we have $b = \Gamma(s,\pi,\omega,A\psi)$, and then we have the lemma.
\end{proof}
Then, applying \cite[(2.14)]{BOO96} as in \cite[\S9]{LR05}, we have Proposition \ref{ind_step}.

Hence, to complete the induction, it only remains to verify \eqref{herher} in the case $\lambda=0$, that is, $\pi$ is trivial. 
We prove \eqref{herher} for the trivial representation in \S\ref{calcu} below to finish the proof of the property \eqref{arc}. 

%%%%%%%%%%%%%

\subsubsection{
                    Skew-hermitian case
                     }

We use the setting of \S\ref{notations}. Suppose that $\V$ is skew-hermitian. Recall that $\omega=1$ or $\sgn$. In this case, $G = \C^1$ and 
\[
 U(V^\trd) = \left\{ \begin{pmatrix} 1 & 0 \\ x & 1 \end{pmatrix} \:\middle| x \in i\R \right\}.
\]
Any irreducible representation of $G$ is of the form 
\begin{align}\label{pi_l}
 \pi_l:\C^1 \rightarrow \C^\times: z \mapsto z^l
\end{align}
for some $l \in \Z$. We know that 
\[
\std{\circ}\phi_{\pi_l} \otimes \omega \cong D_{2l} \otimes \omega \cong D_{2l},
\]
so we have
\[
\gamma(s+\frac{1}{2},\phi_{\pi_l}\otimes\omega,\psi,\std) = i(-1)^l \frac{\Gamma_\C(-s+\frac{1}{2}+|l|)}{\Gamma_\C(s+\frac{1}{2}+|l|)}.
\]
It is obvious that $\gamma(s, \phi_{\pi_l}\otimes\omega,\std,\psi)$ does not depend on $\omega$. On the other hand, $\gamma^\V(s,\pi\times\omega,\psi)$ does also not depend on $\omega$.
Thus we may assume $\omega=1$. Moreover, we may assume $l \geq 0$ by the property \eqref{sd}. Then, by Proposition \ref{cal} \eqref{skherch} below, we have 
\[
\gamma(s+\frac{1}{2},\phi_{\pi_l}\otimes\omega,\psi,\std)
=\gamma^\V(s+\frac{1}{2}, \pi_l\times\omega,\psi).
\]

\begin{rem}
Let $\V$ be an anisotropic $\epsilon$-hermitian space over $\H$. Lemma \ref{eigen=gamma} holds even in the skew-hermitian case. By the lemma, we have
\begin{align}\label{eigen=gamma2}
l_{\psi_A}(f_s) = \Gamma^\V(s,\pi,1,A,\psi) l_{\psi_A}(f_{-s}).
\end{align}
for an irreducible representation $\pi$ of $G$, $f \in \tilde{\pi}\otimes\pi \subset I(s,0)$, and $A \in \u(V^\tru)_{reg}$. This equation is useful to compute the $\gamma$-factor in the case where $\V$ is anisotropic.
\end{rem}

%%%%%%%%%%%%%

\subsubsection{
                    A remark on the non-archimedean case
                     }\label{nonarc}

In the non-archimedean case, the local Langlands correspondence is partly proved. Although it is not completed yet in general, we can conclude the equation \eqref{desi} admitting the local Langlands correspondence.
 
Let $F$ be a non-archimedean local field of characteristic zero, let $\omega$ be a character of $F^\times$, let $D$ be a quaternion algebra over $F$ (it may be split), let $\V=(V,h)$ be an $\epsilon$-hermitian space over $D$ and let $G$ be the isometry group of $\V$. 
We may fix a globalization of $(F,D,\V,G,\omega)$ as follows:
\begin{itemize}
\item a number field $\F$ and a place $v_0$ such that $\F_{v_0} = F$;
\item a quaternion algebra $\D$ over $\F$ such that $\D_{v_0} = D$ and $\D_v$ are split for all non-archimedean places $v \not=v_0$;
\item an $\epsilon$-hermitian space $\underline{\V}$ over $\D$ and its isometry group $\underline{G}$ such that $\underline{\V}_{v_0} =\V$, $\underline{\V}_v$ is unramified and $\underline{G}(F_v)$ is quasi split over $F_v$ for all non-archimedean places $v \not=v_0$;
\item a Hecke character $\underline{\omega}$ of $\A^\times$ such that $\underline{\omega}_{v_0} = \omega$ is $\underline{\omega}_v$ are unramified for all non-archimedean places $v \not=v_0$, where $\A$ is the ring of adeles of $\F$.
\end{itemize}

We admit the following two (expected) hypotheses;
\begin{enumerate}
\item The local Langlands correspondence for $G$; \label{hyp1}
\item Existence of the global functorial lifting to $\GL_N$ associated to the standard representation of $^L\underline{G}$ into $\GL_N(\C)$ for an irreducible cuspidal automorphic  representation of $\underline{G}(\A)$. \label{hyp2}
\end{enumerate}

\begin{rem}
These hypotheses were proved by Arthur \cite{Art13} and Mok \cite{Mok15} for quasi-split classical groups. Moreover,  Kaletha, Minguez, Shin, and White extended their work to inner-forms of unitary groups \cite{KMSW14}.
\end{rem}

By a property of the local Langlands correspondence, it suffices to show the equation \eqref{desi} for all irreducible tempered representations. On the other hand, an irreducible tempered representation $\pi$ can be realized as a direct summand of a representation induced from an irreducible square integrable representation $\sigma$ of some Levi subgroup, and the $L$-parameter of $\pi$ factors through that of $\sigma$. Thus, it suffices to prove the equation \eqref{desi} for all square integrable representations. 

Let $\pi$ be an irreducible square integrable representation of $G$. Then, one can apply \cite[Theorem 5.8]{Sug12} with $S = \{v_0\}$ and $\widehat{U} = \{\pi\}$. Thus, there is an irreducible cuspidal automorphic representation $\Pi$ of $\underline{G}(\A)$ such that 
\begin{itemize}
\item $\Pi_{v_0} \cong \pi$,
\item $\Pi_v$ is unramified for all non-archimedean places $v \not= v_0$. 
\end{itemize}
Note that, by a property of functrial lifting, we have
\[
\gamma(s,\phi_{\Pi_v}\otimes\underline{\omega}_v, \std, \psi) 
 = \gamma^{GJ}(s,L(\Pi)_v\otimes\underline{\omega}_v,\psi)
\]
for all places $v$. Here, we denote by $L(\Pi)$ the global functorial lifting of $\Pi$.
Then, by the global functional equation of $\pi\times \omega$ (Theorem \ref{maingamma} \eqref{gfe}) and that of $L(\Pi)\otimes\omega$, we can  conclude the equation \eqref{desi} at $v_0$ from those at $v\not= v_0$.

%%%%%%%%%%%%%%%%%%%%%%%%%%%%%%   Calculations   %%%%%%%%%%%%%%%%%%%%%%%%%%%%%%

\section{
            Calculations
            }\label{calcu}

In \S6.4, we prove the properties \eqref{min} and \eqref{arc} of Theorem \ref{maingamma}
admitting a formula of the $\gamma$-factor of the trivial representation (and the characters in the skew-hermitian case) of quaternionic unitary groups. In this section, we compute them to finish the proof of Theorem \ref{maingamma}.

\begin{prop}\label{cal}
Let $F$ be a local field of characteristic $0$, let $\psi$ be a non-trivial additive character of $F$, let $D$ be a quaternion algebra over $F$, and  let $\V$ be an arbitrary $n$-dimensional $\epsilon$-hermitian space over $D$. Note that $D$ is possibly split. We denote by $1$ the trivial representation of $G$.
\begin{enumerate}
\item In the hermitian case, we have \label{hertriv}
\[
\gamma^\V(s+\frac{1}{2}, 1\times1, \psi) = \prod_{j=-n}^n\gamma_F(s+\frac{1}{2}+j,1,\psi).
\]
\item In the skew-hermitian case, we have \label{skhertriv}
\[
\gamma^\V(s+\frac{1}{2}, 1\times1, \psi) = \gamma_F(s+\frac{1}{2}, \chi_{\disc(\V)}, \psi)
\prod_{j=-(n-1)}^{n-1}\gamma_F(s+\frac{1}{2}+j,1,\psi).
\]
\item In the case $F=\R, D = \H$, and $\V = (D, \langle i \rangle)$, for $A \in \u(V^\tru)_{reg}$, we have \label{skherch}
\[
\gamma^\V(s+\frac{1}{2}, \pi_l\times 1,\psi) = i(-1)^l \frac{\Gamma_\C(-s+\frac{1}{2} + l)}{\Gamma_\C(s+\frac{1}{2}+l)}
\]
where $\pi_l$ is the character of $G$ defined by \eqref{pi_l} and we fix $\psi$ as $\psi(x) = e^{2\pi i x}$ for $x \in \R$.
\end{enumerate}
\end{prop}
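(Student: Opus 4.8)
The plan is to reduce, via the local functional equation \eqref{locFE_zeta} of the doubling zeta integrals, to the explicit evaluation of the proportionality constant $\Gamma^\V(s,\pi,1)$ of Theorem \ref{zeta_int}\,(2) for $\pi$ the trivial representation (resp.\ $\pi=\pi_l$). Indeed, combining \eqref{locFE_zeta} with $Z^\V(M(s,1)f_s,\xi)=\Gamma^\V(s,\pi,1)Z^\V(f_s,\xi)$ and with $c_\pi(-1)=1$ for the trivial representation, parts (1) and (2) become the single assertion
\[
\Gamma^\V(s,\mathbf 1,1)=e(G)\,|2|^{-2ns+n(n-\frac12)}\Bigl(\prod_{i=0}^{n-1}\gamma_F(2s-2i,1,\psi)\Bigr)^{-1}\cdot\gamma^\V(s+\tfrac12,\mathbf 1\times 1,\psi),
\]
with the last factor replaced by the target product of $\gamma_F$'s; likewise (3) becomes an identity for $\Gamma^\V(s,\pi_l,1)$ after inserting $c_{\pi_l}(-1)=(-1)^l$ and the factor $R(s,1,A,\psi)$ of \S\ref{pnf}, which here (since $\disc(\V)=-1$) equals $|N_V(A)|^{-s}\,\epsilon(\tfrac12,\chi_{\disc(\V)},\psi)=|N_V(A)|^{-s}\cdot i$. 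Thus the entire content is the computation of $\Gamma^\V(s,\pi,1)$ for these minimal $\pi$.

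To compute it I would exploit that $M(s,1)$, and hence the normalized operator $M^*(s,1,A,\psi)$, preserves $K$-types, so it acts by a scalar on the line $\widetilde{\pi}\otimes\pi\subset I(s,1)$ carried over from $\widetilde{\pi}\otimes\pi\subset I(0,1)$; by the argument of Lemma \ref{eigen=gamma} (in the shape \eqref{eigen=gamma2}) that scalar is $\Gamma^\V(s,\pi,1,A,\psi)$, so $\Gamma^\V(s,\pi,1,A,\psi)=l_{\psi_A}(f_s)/l_{\psi_A}(f_{-s})$ for $f$ spanning that line. It therefore suffices to evaluate the degenerate Whittaker integral $l_{\psi_A}(f_s)=\int_{U(V^\trd)}f_s(u)\psi_A(u)\,du$ on the distinguished section: the normalized spherical section $f^0_s$ (with $f^0_s|_K\equiv 1$, which exists because $\omega=1$ and $G^\Box=P(V^\tru)K$) in the trivial case, and the weight-$l$ $K$-type section in the case of $\pi_l$. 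In each case the integrand is, up to an explicit power of a reduced norm, a $K$-finite function on $U(V^\trd)\cong\u(V^\trd)$ twisted by $u\mapsto\psi(\Tr_{V^\Box}(uA))$, so $l_{\psi_A}(f_s)$ is a classical Gaussian/$\Gamma$-type integral over $\u(V^\trd)$ when $F=\R$ and a product of Tate local zeta integrals when $F$ is non-archimedean; evaluating it by the usual Whittaker formulas produces a product of $\Gamma_\C$ (resp.\ $\Gamma_\R$, resp.\ $\gamma_F$) factors. Equivalently one may work with $\Gamma^\V(s,\mathbf 1,1)=\mathfrak{c}(s)\,Z^\V(f^0_{-s},\mathbf 1)/Z^\V(f^0_s,\mathbf 1)$, where $M(s,1)f^0_s=\mathfrak{c}(s)f^0_{-s}$ is the Gindikin--Karpelevich constant for the degenerate principal series $I(s,1)$ of $G^\Box$ (known from the computations underlying Proposition \ref{norm_ex}; cf.\ the analogue of \cite[Lemma 3.1]{Ike17}, \cite{Yam17}, \cite{Igu86}) and the ratio of spherical zeta integrals is trivial when $G$ is compact, since then $Z^\V(f^0_s,\mathbf 1)=\Vol(G)$ does not depend on $s$.

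For parts (1) and (2) with $\dim\V$ large I would first cut the dimension down: the multiplicativity \eqref{ml} of Theorem \ref{maingamma}, applied to the trivial representation (a constituent of the representation induced from the modulus-twisted trivial character of a Levi attached to a maximal totally isotropic subspace), together with property \eqref{GLn} for the resulting $\GL$-factor and property \eqref{split} to reduce to a division algebra, leaves only the case of $\V$ anisotropic --- over $\R$ the only such case of unbounded rank is the definite hermitian space, where $G$ is compact, so the computation of the previous paragraph applies directly. One then checks, using the duplication identity $\Gamma_\C(s)=\Gamma_\R(s)\Gamma_\R(s+1)$ and the explicit archimedean $\gamma$-factors of \S\ref{notations}, that the $\Gamma$-factors produced by the Whittaker integral and by the modulus-twisted $\GL$-factors recombine into $\prod_{j=-n}^{n}\gamma_F(s+\tfrac12+j,1,\psi)$; in the skew-hermitian case the extra factor $\gamma_F(s+\tfrac12,\chi_{\disc(\V)},\psi)$ comes out through the $\epsilon(\tfrac12,\chi_{\disc(\V)},\psi)$ sitting inside $R$ and the parity of the number of $\Gamma$-factors, and in (3) the sign $i(-1)^l$ comes from that same $\epsilon$-factor together with the sign of the Fourier transform in the archimedean integral.

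The main obstacle is purely one of bookkeeping: the statement that the doubling integral of the trivial representation is governed by a product of Tate factors is classical, but one must pin down the exact power of $|2|$ and the Kottwitz sign $e(G)$ that enter through the measure $\iota_*(d^\psi X)$ and Proposition \ref{norm_ex}, carry out the multidimensional Whittaker integral over $\u(V^\trd)$ with the correct normalization of $\psi_A$, and match the archimedean conventions ($\Gamma_\R$ versus $\Gamma_\C$, and all intervening powers of $i$) so that everything telescopes to the advertised products; following the character $\chi_{\disc(\V)}$ through the multiplicativity reduction in the skew-hermitian case requires the same attention.
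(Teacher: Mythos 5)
Your core mechanism for the rank-one archimedean computations is the same as the paper's: $M^*(s,1,A,\psi)$ acts on the $\tilde\pi\otimes\pi$-isotypic line by the scalar $\Gamma^\V(s,\pi,1,A,\psi)$, so that $\Gamma^\V$ is the ratio $l_{\psi_A}(f_s)/l_{\psi_A}(f_{-s})$, which one then evaluates by an explicit Iwasawa decomposition and a Gaussian/Gamma-integral manipulation (the paper in fact never evaluates $l_{\psi_A}(f_s)$ in closed form; it multiplies by $\Gamma_\C(s+\tfrac32)$, unfolds to a double integral, and reads off the functional equation from the symmetry $t\leftrightarrow t^{-1}$ — your ``classical Gaussian integral'' step should be organized this way, especially for $\pi_l$ where a Cauchy-theorem contour shift is needed). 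Up to that point your plan is sound.

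The genuine gap is in the reduction. Multiplicativity, the $\GL_n$-property and the split-factor property leave not only the definite hermitian space over $\H$ but \emph{all} anisotropic $\epsilon$-hermitian spaces over a division quaternion algebra, in particular the non-archimedean ones (hermitian of rank $1$, skew-hermitian of ranks $1,2,3$ over a $p$-adic division $D$) and the definite hermitian spaces over $\H$ of arbitrary rank. Your proposal offers no workable route for these: the phrase ``a product of Tate local zeta integrals'' hides exactly the hard local computation (an Igusa/Yamana-type Siegel--Whittaker evaluation for a non-split quaternionic unitary group) that the paper is structured to avoid, and in the $p$-adic anisotropic case it is not even clear that the $\mathbf 1\otimes\mathbf 1$-isotypic line is the $K$-spherical line, so Lemma \ref{eigen=gamma} does not transfer as stated. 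The paper instead handles every anisotropic case by the globalization of \S\ref{uniqprf}: embed $(F,D,\V)$ in a global datum whose other non-split place is real with $\V$ maximally split, and use the global functional equation \eqref{gfe} to transport the already-proved formulas to the anisotropic place. This step is indispensable and is missing from your argument. A secondary problem: your alternative via $\Gamma^\V(s,\mathbf 1,1)=\mathfrak c(s)Z(f^0_{-s},\mathbf 1)/Z(f^0_s,\mathbf 1)$ for compact $G$ requires the \emph{spherical} eigenvalue $\mathfrak c(s)$ of $M(s,1)$ on $I(s,1)$ for $\Sp(n,n)$, which is not the Whittaker normalization constant $c(s,\omega,A,\psi)$ computed in Proposition \ref{norm_ex}; citing that proposition for $\mathfrak c(s)$ conflates two different constants, and no source for the archimedean quaternionic $\mathfrak c(s)$ is supplied.
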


\begin{proof}
When $D$ is split, we know that this formulas \eqref{hertriv} and \eqref{skhertriv} hold. 
Thus, we may assume that $D$ is a division quaternion algebra over $F$. 
Once \eqref{hertriv} and \eqref{skhertriv} in the case where $F=\R, D =\H, n=0,1$ are proved,
by using multiplicativity (Theorem \ref{maingamma} \eqref{gfe}), we have \eqref{hertriv} and \eqref{skhertriv} in the case where $F=\R, D=\H$ and $\V$ has a $\lfloor n/2 \rfloor$-dimensional totally isotropic subspace. 
Then, by using the global argument as in \S\ref{uniqprf}, we have \eqref{hertriv} and \eqref{skhertriv} in the case $\V$ is anisotropic. Moreover, by using multiplicativity (Theorem \ref{maingamma} \ref{gfe}) again, we have \eqref{hertriv} and \eqref{skhertriv} in the general case.

Now we prove \eqref{hertriv}. By the above discussion, it only remains to show \ref{hertriv} in the case $F=\R, D =\H$, and $n=0,1$. If $n=0$, the claim is obvious.
Let $n=1$. We may fix $\psi$ as $\psi(x) = e^{2\pi i x}$ for $x \in \R$. We may put $\V = (D, \langle 1 \rangle)$ and  
\begin{align}\label{aaaa}
A = \begin{pmatrix} 0 & i \\ 0 & 0 \end{pmatrix} \in \u(V^\tru).
\end{align}
Take the $K$-invariant section $f_s \in I(s,\omega)$ with $f_s(e)=1$.
The equation 
\[
\begin{pmatrix} 1 & 0 \\ x & 1 \end{pmatrix}
= \begin{pmatrix} \frac{1}{\sqrt{1+xx^*}} & \frac{x^*}{\sqrt{1+xx^*}} \\
                        0 & \sqrt{1+xx^*} \end{pmatrix}
   \begin{pmatrix} \frac{1}{\sqrt{1+xx^*}} & \frac{-x^*}{\sqrt{1+xx^*}} \\
                        \frac{x}{\sqrt{1+xx^*}} & \frac{1}{\sqrt{1+xx^*}} \end{pmatrix}
\]
gives an Iwasawa decomposition of 
\[
\begin{pmatrix} 1 & 0 \\ x & 1\end{pmatrix} \in U(V^\trd)
\]
in $G^\Box$. Hence we have
\[
f_s(\begin{pmatrix} 1 & 0 \\ iy+jz+kw & 1 \end{pmatrix}) 
= \left(\frac{1}{1+y^2+z^2+w^2}\right)^{s+\frac{3}{2}}
\]
for $y,z,w  \in \R$ and then we have
\begin{align*}
&\Gamma_\C(s+\frac{3}{2}) l_{\psi_A}(f_s) \\
&=2(2\pi)^{-s-\frac{3}{2}} \int_0^\infty \int_{\R^3}
    \left(\frac{1}{1+y^2+z^2+w^2}\right)^{s+\frac{3}{2}} e^{-t}t^{s+\frac{3}{2}} e^{4\pi i y} 
        \:dy \:d^\times t \\
&=2\int_0^\infty\int_{\R^3} e^{-2\pi(1+y^2+z^2 + w^2)t}e^{4\pi i y} 
   \:dy\:dz\:dw\: t^{s+\frac{3}{2}}\:d^\times t \\
&= \frac{1}{\sqrt{2}}\int_0^\infty e^{-2\pi(t+\frac{1}{t})}t^s \:d^\times t.
\end{align*}
Note that this integral is not zero. By the change of variable $t \leftrightarrow t^{-1}$, we get
\[
\Gamma_\C(s+\frac{3}{2})l_{\psi_A}(f_s) = \Gamma_\C(-s+\frac{3}{2})l_{\psi_A}(f_{-s}).
\]
Therefore, by \eqref{eigen=gamma2}, we have 
\[
\Gamma^\V(s+\frac{1}{2},\pi\times1,\psi) 
 = \frac{\Gamma_\C(-s+\frac{3}{2})}{\Gamma_\C(s+\frac{3}{2})}.
\]
Since $\omega_s(N_V(A)) = 1, c_{\pi}(-1) = 1$ and 
\[
\gamma(s + \frac{1}{2},\sgn,\psi) = i\frac{\Gamma_\R(-s+\frac{3}{2})}{\Gamma_\R(s+\frac{3}{2})}, \ \ 
\epsilon(\frac{1}{2},\sgn,\psi) = i,
\]
we have
\begin{align*}
 \gamma^\V(s+\frac{1}{2},\pi\times 1, \psi) 
            &= \frac{\Gamma_\C(-s+\frac{3}{2})\Gamma_\R(-s+\frac{3}{2})}
                       {\Gamma_\C(s+\frac{3}{2}) \Gamma_\R(s+\frac{3}{2})} \\
           &= \frac{\Gamma_\R(-s-\frac{1}{2})
                       \Gamma_\R(-s+\frac{1}{2})\Gamma_\R(-s+\frac{3}{2})}
                        {\Gamma_\R(s-\frac{1}{2})
                        \Gamma_\R(s+\frac{1}{2})\Gamma_\R(s+\frac{3}{2})}.
\end{align*}
Hence we have \eqref{hertriv} with $n=1$, and hence we complete the proof of \eqref{hertriv}.

Now we prove \eqref{skherch}. Note that we also obtain \eqref{skhertriv} with $F=\R, n=1$ by putting $l=0$. Take $f \in \pi_{-l} \otimes \pi_{l} \subset I(s,0)$ such that $f(e) = 1$,  and take $A$ as in \eqref{aaaa}. Then, 
\begin{align}\label{skint}
\Gamma_\C(s+\frac{1}{2}+l) l_{\psi_A}(f_s) 
= \int_0^\infty e^{-2\pi(t+\frac{1}{t})}P(t)t^s \:d^\times t
\end{align}
where
\[
P(t) = t^{\frac{1}{2}+l}\int_\R e^{-2\pi t(y-\frac{i}{t})^2}(iy-1)^{2l} \:dy.
\]
By Cauchy's integral theorem, $P(t)$ becomes
\[
\int_\R e^{-2\pi y^2}(iy - (\frac{1}{\sqrt{t}} + \sqrt{t}))^{2l} \:dy,
\]
which is invariant under the permutation $t \leftrightarrow t^{-1}$. Note that the right hand side of \eqref{skint} is not zero since it is the Mellin transform of the non-zero function $e^{-2\pi(t+t^{-1})}P(t)$.
Thus, by the change of variable $t \leftrightarrow t^{-1}$ of the right hand side of \eqref{skint}, we get 
\[
\Gamma_\C(s+\frac{1}{2}+l)l_{\psi_A}(f_s) = \Gamma_\C(-s+\frac{1}{2}+l)l_{\psi_A}(f_{-s}).
\]
Then, by \eqref{eigen=gamma2}, we have \eqref{skherch}, and hence we have \eqref{skhertriv}.
\end{proof}

Proposition \ref{cal} contains the things which are not proved in \S\ref{prf} (see \S\ref{relLLC}). Hence we complete the proof of Theorem \ref{maingamma}.

%%%%%%%%%%%%%%%%%%%%%%%%%%%%%%%%%  Applications   %%%%%%%%%%%%%%%%%%%%%%%%%%%%%%

\section{
           Applications
           }\label{appli}

In this section, we give two applications of the main theorem.

\subsection{
                The local root number
                }\label{appli1}

In \cite{LR05}, they determine the root number of irreducible representations for symplectic groups and (even) orthogonal groups. We can consider the analogue of their work.
In this subsection, we suppose that $\V$ is an $\epsilon$-hermitian space and $\omega^2 = 1$.

At first, we note the irreducibility of $I(0,\omega)$, which is a special case of \cite[Theorem1, 2]{Yam11}.

\begin{prop}
The degenerate principal series representation $I(0,\omega)$ is irreducible as a representation of $G^\Box$.
\end{prop}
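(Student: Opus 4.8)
The plan is to derive this from Yamana's classification of the reducible Siegel degenerate principal series of isometry groups of $\epsilon$-hermitian spaces over quaternion algebras \cite{Yam11}. First I would fix the dictionary. Since $\V^\Box = \V\oplus(-\V)$ contains the Lagrangian $V^\tru$, the group $G^\Box=\Isom(\V^\Box)$ is quasi-split and $P(V^\tru)$ is its Siegel parabolic subgroup, whose Levi factor is $\GL(V^\tru)\cong\GL_n(D)$ (\S\ref{doubledsp}). At $s=0$ one has $\omega_0=\omega$ and $\Delta_{V^\tru}=N_{V^\tru}$ in the $\epsilon$-hermitian case (\S\ref{zeta}), so that $I(0,\omega)$ is the normalized parabolic induction
\[
I(0,\omega)=\Ind_{P(V^\tru)}^{G^\Box}\bigl(\omega\circ N_{V^\tru}\bigr),
\]
which is precisely a representation of the form treated in \cite[Theorems 1, 2]{Yam11}.

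Next I would invoke that classification. Yamana's theorems describe, in terms of $s$ and $\omega$, exactly the locus at which $\Ind_{P(V^\tru)}^{G^\Box}(\omega_s\circ\Delta_{V^\tru})$ fails to be irreducible; the conditions there put $s$ at a prescribed (half-)integer together with an arithmetic constraint relating $\omega$ (or $\omega^2$) to the quadratic characters attached to $\disc(\V)$ and to $-1$. The only genuine work is then to check that the point $(s,\omega)=(0,\omega)$ with $\omega^2=1$ avoids this reducibility locus -- equivalently that $\omega\circ N_{V^\tru}$ is an irreducibility point in the sense of loc.\ cit. This is the place where the hypothesis $\omega^2=1$ is used, and it should be read off directly from the statement of \cite[Theorems 1, 2]{Yam11}, case by case according as $\V$ is hermitian or skew-hermitian and $n$ is even or odd.

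It then remains to treat the case where $D$ is split. Here Morita equivalence (\S\ref{morita}) identifies $G^\Box$ with the isometry group of the $2n$-dimensional symplectic or symmetric $F$-space $\V^{\Box\natural}$ and $I(0,\omega)$ with the corresponding Siegel degenerate principal series of $\Sp_{4n}(F)$ or of the split orthogonal group $\mathrm{O}(\V^{\Box\natural})$; for these the irreducibility at $s=0$ with $\omega^2=1$ is classical (Kudla--Rallis), so no new input is needed. Combining the split and non-split cases gives the proposition.

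The main obstacle I anticipate is entirely bookkeeping rather than conceptual: one must faithfully translate the normalization of $\Delta_{V^\tru}$ and of the induction used here into the conventions of \cite{Yam11}, and then verify uniformly across the case distinctions (hermitian vs.\ skew-hermitian, $D$ split vs.\ division, parity of $n$) that $s=0$ with $\omega^2=1$ always lands outside the reducibility locus. Once the dictionary is fixed this is a routine check.
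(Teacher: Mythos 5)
Your proposal is correct and is essentially the paper's own argument: the paper disposes of this proposition in one line by citing it as a special case of \cite[Theorems 1, 2]{Yam11}, exactly the classification you invoke, with the standing hypothesis $\omega^2=1$ of that subsection guaranteeing that $s=0$ lies outside the reducibility locus. Your additional remarks on the Morita dictionary in the split case and on matching normalizations are sensible bookkeeping but do not constitute a different route.
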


Then, the normalized intertwining operator $M(0,\omega,A,\psi)$ acts on $I(0,\omega)$ as the  multiplication by a scalar for $A \in \u(V^\tru)_{reg}$. However, by the relation $l_{\psi_A} = M(0,\omega,A,\psi)\circ l_{\psi_A}$, the scalar is $1$. Thus, we have
\[
\epsilon^\V(\frac{1}{2}, \pi\times\omega,\psi) = \gamma^\V(\frac{1}{2}, \pi\times\omega,\psi)
= c_\pi(-1)R(0,\omega,A,\psi).
\]
By computing the correcting factor, we have the formula of the root number: 

\begin{prop}
\[
\epsilon^\V(\frac{1}{2}, \pi\times\omega,\psi)
 = c_\pi(-1) \omega(-1)^n
   \begin{cases}
    \epsilon(\frac{1}{2}, \omega, \psi) & \mbox{in the hermitian case}\\
    \omega(\disc(\V))\epsilon(\frac{1}{2}, \chi_{\disc(\V)},\psi) 
    & \mbox{in the skew-hermitian case}
    \end{cases}
\]
\end{prop}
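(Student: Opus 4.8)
The plan is to start from the identity $\epsilon^\V(\tfrac12,\pi\times\omega,\psi)=c_\pi(-1)\,R(0,\omega,A,\psi)$ established just above (valid for any $A\in\u(V^\tru)_{reg}$, by irreducibility of $I(0,\omega)$ together with the relation $l_{\psi_A}=M(0,\omega,A,\psi)\circ l_{\psi_A}$) and to evaluate $R(0,\omega,A,\psi)$ explicitly under the hypothesis $\omega^2=1$. I would first record the elementary consequences of $\omega^2=1$: one has $\omega_0=\omega$ and $\omega^{-1}=\omega$, and $\omega$ factors through $F^\times/F^{\times2}$, so $\omega(x)=(x,d_\omega)_F$ for some $d_\omega\in F^\times/F^{\times2}$; in particular $(\omega,c)_F:=(d_\omega,c)_F=\omega(c)$ for $c\in F^\times$, and $\epsilon(\tfrac12,1,\psi)=1$.

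In the hermitian case the definition of $R$ in \S\ref{pnf} reads $R(0,\omega,A,\psi)=\omega(N_V(A))\,\gamma(\tfrac12,\omega\chi_{\disc(A)},\psi)\,\epsilon(\tfrac12,\chi_{\disc(A)},\psi)^{-1}$. Since $\omega\chi_{\disc(A)}$ is a self-dual quadratic character, the local $L$-factors in the numerator and denominator of its $\gamma$-factor cancel, so $\gamma(\tfrac12,\omega\chi_{\disc(A)},\psi)=\epsilon(\tfrac12,\omega\chi_{\disc(A)},\psi)$. I would then invoke the bi-multiplicativity of local root numbers of quadratic characters: for quadratic $\chi_1,\chi_2$ one has $\epsilon(\tfrac12,\chi_1\chi_2,\psi)=\epsilon(\tfrac12,\chi_1,\psi)\,\epsilon(\tfrac12,\chi_2,\psi)\,(\chi_1,\chi_2)_F$. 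Applying this with $\chi_1=\omega$, $\chi_2=\chi_{\disc(A)}$ and using $(\omega,\disc(A))_F=\omega(\disc(A))$, the factor $\epsilon(\tfrac12,\chi_{\disc(A)},\psi)$ cancels and one is left with $R(0,\omega,A,\psi)=\omega(N_V(A))\,\omega(\disc(A))\,\epsilon(\tfrac12,\omega,\psi)$. The defining relation $\disc(A)=(-1)^nN_V(A)$ in $F^\times/F^{\times2}$ then gives $\omega(N_V(A))\,\omega(\disc(A))=\omega(-1)^n$, which yields the hermitian case. In the skew-hermitian case the definition of $R$ gives directly $R(0,\omega,A,\psi)=\omega(N_V(A))\,\epsilon(\tfrac12,\chi_{\disc(\V)},\psi)$, so it only remains to identify $\omega(N_V(A))=\omega(-1)^n\,\omega(\disc(\V))$, which by $\disc(A)=(-1)^nN_V(A)$ amounts to $\disc(A)\equiv\disc(\V)\pmod{F^{\times2}}$. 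Here I would write $A=\iota(X)$ with $X\in\u=\{X\in\M_n(D):{}^t\!X^*=X\}$ (the skew-hermitian case, $\epsilon=-1$); unwinding the commutative square defining $\varphi_A$ shows that $\varphi_A$ is represented by the matrix $XR$ in the basis $v_1,\dots,v_n$, where $R=(h(v_i,v_j))_{ij}$, whence $N_V(A)=N(X)\,N(R)$ and $\disc(A)=N(X)\,\disc(\V)$.

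Thus everything reduces to the fact that the reduced norm $N(X)$ of a Hermitian matrix $X\in\M_n(D)$ lies in $F^{\times2}$: a non-degenerate Hermitian form over $D$ admits an orthogonal basis, so there is $P\in\GL_n(D)$ with ${}^t\!P^*XP=\diag(d_1,\dots,d_n)$ and $d_i\in F^\times$ (the diagonal entries of a Hermitian matrix are fixed by $*$, hence lie in $F$); since the anti-involution $Y\mapsto{}^t\!Y^*$ preserves the reduced norm, $N(X)\equiv\prod_i N_{D/F}(d_i)=\prod_i d_i^2\pmod{F^{\times2}}$, so $\omega(N(X))=1$ and $\disc(A)\equiv\disc(\V)$, completing the skew-hermitian case. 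The argument is uniform in $D$: the formulas for $R$ and for $\disc$ are the same when $D$ is split, so no separate treatment via the Morita equivalence is needed. I expect the only genuinely delicate point to be precisely this last identification $\disc(A)\equiv\disc(\V)$ in the skew-hermitian case, i.e.\ the squareness of the reduced norm of a Hermitian matrix over $D$, together with keeping careful track of the sign $\omega(-1)^n$ and of the normalization $\epsilon(\tfrac12,1,\psi)=1$; the remaining steps are a mechanical substitution into the explicit formula for $R(s,\omega,A,\psi)$ and the familiar multiplicativity of quadratic local constants.
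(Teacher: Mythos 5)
Your proof is correct and takes essentially the same route as the paper: both start from $\epsilon^\V(\tfrac12,\pi\times\omega,\psi)=c_\pi(-1)R(0,\omega,A,\psi)$ and evaluate $R$ via the quadratic-character identity $\gamma(\tfrac12,\omega\chi_{\disc(A)},\psi)\,\epsilon(\tfrac12,\chi_{\disc(A)},\psi)^{-1}=\omega(\disc(A))\,\epsilon(\tfrac12,\omega,\psi)$ together with $\disc(A)=(-1)^nN_V(A)$, and, in the skew-hermitian case, the equality $\disc(A)\equiv\disc(\V)$ in $F^\times/F^{\times2}$. The only difference is that you supply proofs of the two ingredients the paper merely cites or asserts --- the multiplicativity of quadratic local constants (Tate) and the squareness of $N(X)$ for ${}^t\!X^*=X$ --- and both of those arguments are sound.
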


\begin{proof}
Let $\V$ be a hermitian space. Since $\omega^2=1$, we have
\[
\gamma(\frac{1}{2}, \omega\chi_{\disc(A)}, \psi) \epsilon(\frac{1}{2},\chi_{\disc(A)},\psi)^{-1}
= \omega(\disc(A)) \epsilon(\frac{1}{2},\omega,\psi)
\]
(\cite[\S3, Corollary 2]{Tat77}). Hence, we have
\begin{align*}
\epsilon^\V(\frac{1}{2}, \pi\times\omega,\psi)
&= c_\pi(-1) \omega(N_V(A))\gamma(\frac{1}{2}, \omega\chi_{\disc(A)}, \psi) \epsilon(\frac{1}{2},\chi_{\disc(A)},\psi)^{-1} \\
&= c_\pi(-1) \omega(-1)^n \epsilon(\frac{1}{2},\omega,\psi).
\end{align*}

Let $\V$ be a skew-hermitian space. Then, $\disc(A) = \disc(\V)$ in $F^\times/F^{\times2}$.
Hence, we have
\begin{align*}
\epsilon^\V(\frac{1}{2}, \pi\times\omega,\psi)
& = c_\pi(-1)\omega(N_V(A))\epsilon(\frac{1}{2}, \chi_{\disc(\V)},\psi) \\
& = c_\pi(-1)\omega(-1)^n\omega(\disc(\V))\epsilon(\frac{1}{2}, \chi_{\disc(\V)},\psi).
\end{align*}
\end{proof}

\subsection{
                The doubling zeta integral of representations induced from a minimal parabolic subgroup
                }\label{appli2}

In this subsection, over a non-archimedean local field of odd residual characteristic, we compute the zeta integral of some spherical representations with respect to a certain subgroup.
Note that if $G$ is unramified, then the spherical representations above are the unramified representations. 

Let  $F$ a non-archimedean local field of characteristic $0$, and let $D$ be a division quaternion algebra of $F$. In this subsection, we assume that the residue characteristic of $F$ is not $2$. Let $\V$ be an $n$-dimensional $\epsilon$-hermitian space. We can take a basis $v_1, \ldots, v_n$ of $V$ such that 
\[
(h(v_i,v_j))_{ij} = \begin{pmatrix} 0 & 0 & J_r \\
                                         0 & R_0 & 0 \\
                                \epsilon J_r & 0 & 0 \end{pmatrix}
\]
with $R_0  =\diag(\alpha_1, \ldots, \alpha_{n_0}), \ord_D(\alpha_i) = 0,-1$ for $i=1, \ldots n_0, 2r+n_0 = n$, and
\[
J_r = \begin{pmatrix}  0 & & 1 \\
                              & \iddots & \\
                             1 & & 0 \end{pmatrix} \in \GL_r(D). 
\]
We choose a basis $e_1, \ldots, e_{2n}$ of $V^\Box = V \times V$ defined by 
\[
e_j = 
\begin{cases}
(v_j,v_j) & 1 \leq j \leq n \\
(v_{j-n}, -v_{j-n}) & n+1 \leq j \leq 2n.
\end{cases}
\]
We may regard $G^\Box \subset \GL_{2n}(D)$ by this basis, and we choose the maximal compact subgroup
\[
K = \{ g \in G^\Box \mid \begin{pmatrix} 1 & 0 \\ 0 & R \end{pmatrix} g \begin{pmatrix} 1 & 0 \\ 0 & R^{-1} \end{pmatrix} \in \GL_{2n}(\cO_D)\}.
\]
Let $W_i$ be a subspace of $V$ spanned by $v_1, \ldots, v_i$ for $i=1, \ldots, r$, and let $P_0$ be the stabilizer of the flag $(0 \subsetneqq W_1 \subsetneqq \cdots \subsetneqq W_r)$ of $\V$. Then, $P_0$ is a minimal parabolic subgroup of $G$ and its Levi subgroup $M_0$ is canonically isomorphic to $(D^\times)^r \times G_0$ where $G_0$ is the unitary group of the $\epsilon$-hermitian space $(D^{n_0}, \langle R_0 \rangle)$. Let $\sigma_0$ be a trivial representation of $G_0$, let $\sigma_i$ be a character of $D^\times$ defined by $\sigma_i(x) = |N_D(x)|^{s_i}$ for some $s_i \in \C$, and let $\sigma$ the representation $\otimes_{i=0}^l\sigma_i$ of $M_0$.
Put $C_0 := G \cap \GL_n(\cO_D)$, and put
\[
C_1 := \{ g \in C_0 \mid R(g-1) \in \M_n(\cO_D)\}.
\]
Then $C_1$ is an open compact subgroup of $G$.

\begin{prop}\label{appl2main}
Let $f_s^\circ \in I(s,1)^K$ be a non-zero $K$ invariant section with $f_s^\circ(e) = 1$,  $\pi$ be a constituent of $\Ind_{P_0}^G\sigma$. Suppose that $\pi$ has a non-zero $C_1$ fixed vector. If we denote by $\xi$ a bi-$C_1$ invariant matrix coefficient of $\pi$ with $\xi(e)=1$, then we have
\[
Z^{\V}(f_s^\circ,\xi) = \frac{\Vol(C_1)}{d^\V(s)} \prod_{i=0}^rL^{\V_i}(s+\frac{1}{2},\sigma_i\times1)
\]
where
\[
d^\V(s)=
\begin{cases}
\zeta_F(s+m+\frac{1}{2}) \prod_{i=1}^{\lfloor n/2 \rfloor}\zeta_F(2s+2n+1-4i) & \mbox{in the hermitian case} \\
\prod_{i=1}^{\lceil n/2\rceil}\zeta_F(2s + 2n+3-4i) & \mbox{in the skew-hermitian case}.
\end{cases}
\]
Note that if $n_0=0$, then $L^{\V_0}(s,\sigma_0)$ denotes 
\[
\begin{cases}
1 & \mbox{in the hermitian case}, \\ L_F(s,\chi_{\disc(\V)}) & \mbox{in the skew-hermitian case}.
\end{cases}
\]
\end{prop}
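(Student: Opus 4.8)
The plan is to evaluate $Z^{\V}(f_s^\circ,\xi)$ by unfolding it along the minimal parabolic $P_0$, reducing to a product of rank-one integrals over $D^\times$ and one integral over the anisotropic kernel $G_0$, and then to assemble the resulting normalizing factors. Recall $\omega=1$ here. First I would realize $\pi$ inside $\Ind_{P_0}^G\sigma$ (if $\pi$ is only a quotient or subquotient, this is harmless since $Z^{\V}(-,\xi)$ depends only on the matrix coefficient $\xi$, and coefficients of a constituent are restrictions of coefficients of the full induced module). Using the hypothesis $p\neq 2$, the space of $C_1$-fixed vectors in $\Ind_{P_0}^G\sigma$ is one-dimensional, spanned by the section supported on $P_0C_1$ normalized at $e$; hence $\xi$ is the associated zonal coefficient built from this spherical vector and its contragredient analogue. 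Substituting $\xi$ into $Z^{\V}(f_s^\circ,\xi)=\int_G f_s^\circ((g,1))\xi(g)\,dg$ and using $P_0$-equivariance unfolds the integral to one over $P_0\backslash G$; the precise shape of $C_1$ is exactly what is needed here, since $(c,1)\in K$ for $c\in C_1$ with $K$ the twisted maximal compact of $G^\Box$ appearing in the definition of $f_s^\circ$.

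Next, using transitivity of parabolic induction along the flag $W_1\subset\cdots\subset W_r$ together with the iterated intertwining maps $\Psi(s,1)$ of \cite[Proposition 1]{LR05} on the degenerate-principal-series side — exactly the mechanism used to prove multiplicativity in \S\ref{fp}, but now tracking the functions themselves rather than only the $\gamma$-factor identity — the integral factorizes. Up to explicit powers of $|\Delta_{V^\tru}|$-type characters, the modular characters $\delta_{P_0}$ and $\delta_{P(V^\tru)}$, and the volume $\Vol(C_1)$, one is left with $\prod_{i=1}^r Z^{\V_i}(\cdot,\xi_i)$ times $Z^{\V_0}(\cdot,1)$, where $\V_i=(D,0)$ carries the unramified character $\sigma_i=|N_D|^{s_i}$ and $\V_0=(D^{n_0},\langle R_0\rangle)$ carries the trivial representation.

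For $i=1,\dots,r$, the factor $Z^{\V_i}(\cdot,\xi_i)$ is a doubling zeta integral for $\GL_1(D)$, i.e.\ a Godement--Jacquet (Tate-type) integral for $\sigma_i$, and by the $\GL_n$-property (Theorem~\ref{maingamma}\eqref{GLn}) together with Yamana's computation it contributes $L^{\V_i}(s+\tfrac12,\sigma_i\times1)$, a product of shifted $\zeta_F$-factors. Since $\V_0$ is anisotropic, $G_0(F)$ is compact, so $Z^{\V_0}(\cdot,1)$ is just a volume times the value furnished by Proposition~\ref{cal}, contributing $L^{\V_0}(s+\tfrac12,\sigma_0\times1)$ (namely $1$ in the hermitian case and $L_F(s,\chi_{\disc(\V)})$ in the skew-hermitian case). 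It then remains to identify the accumulated normalizing constants — in particular the geometric series obtained by summing $f_s^\circ((g,1))$ over the coweight lattice $\Z^r$ that indexes the relevant double cosets in $P_0\backslash G$ — with $\Vol(C_1)/d^{\V}(s)$. Here I would use the explicit formula for $c(s,1,A,\psi)$ from Proposition~\ref{norm_ex}, equivalently the Gindikin--Karpelevich eigenvalue of $M(s,1)$ on $f_s^\circ$, which via Theorem~\ref{zeta_int} satisfies $Z^{\V}(M(s,1)f_s^\circ,\xi)=\Gamma^{\V}(s,\pi,1)Z^{\V}(f_s^\circ,\xi)$, to pin down the product of $\zeta_F$-values as the stated $d^{\V}(s)$.

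The main obstacle is the exact bookkeeping of these normalizing constants. Two points demand care. First, because the Gram matrix $R$ has entries of valuation $-1$ (the cases $\ord_D(\alpha_i)=-1$), the maximal compact $\GL_n(\cO_D)$ is not adapted to $h$; this is precisely why $C_1\subsetneq C_0$ must be used, and the volume $\Vol(C_1)$ and the compatible measure on $P_0\backslash G$ must be tracked through the unfolding. Second, obtaining the precise shifts in $d^{\V}(s)$ — the factors $\zeta_F(2s+2n+1-4i)$ versus $\zeta_F(2s+2n+3-4i)$ in the hermitian and skew-hermitian cases, and the extra factor $\zeta_F(s+m+\tfrac12)$ in the hermitian case — requires the explicit Iwasawa decomposition $G^\Box=P(V^\tru)K$ in the chosen basis and a careful evaluation of $f_s^\circ((g,1))$ on the double-coset representatives before the geometric series is summed. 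The hypothesis $p\neq 2$ is used throughout: it guarantees that $\V$ admits the stated normal form, that $\tfrac12\in\cO_F^\times$ (needed for the matching $K\leftrightarrow C_1$ and for the $2A$-type normalizations of \S\ref{fp}), and that the relevant $C_1$-fixed vectors are genuinely spherical so that the double-coset sum is the expected geometric series.
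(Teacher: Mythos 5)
Your proposal takes a genuinely different route from the paper, but as written it has a real gap: the decisive steps are deferred rather than carried out. You propose to unfold $Z^{\V}(f_s^\circ,\xi)$ along $P_0$, factor it into rank-one Godement--Jacquet integrals plus an integral over the anisotropic kernel, and then sum a geometric series over the coweight lattice. The difficulty is that the doubling machinery (the maps $\Psi(s,\omega)$ of \cite[Proposition 1]{LR05} and the multiplicativity argument of \S\ref{fp}) only yields a proportionality statement between $\Gamma$-factors; it does not give an exact identity expressing $Z^{\V}(f_s^\circ,\xi)$ as a product of lower-rank zeta integrals of specific spherical sections with an explicit constant. Establishing such an identity for the particular section $f_s^\circ$ and the particular compact $C_1$ (which is not hyperspecial, and for which no Iwasawa decomposition $G=P_0C_1$ is available) is essentially as hard as the proposition itself, and your proposal explicitly postpones exactly this ``bookkeeping of normalizing constants.'' Likewise, the one-dimensionality of the $C_1$-fixed space in $\Ind_{P_0}^G\sigma$ is asserted but not needed or proved anywhere, and Proposition~\ref{cal} gives the $\gamma$-factor of the trivial representation, not the value of $Z^{\V_0}(\cdot,1)$, so the anisotropic-kernel contribution is also not actually evaluated.

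The paper avoids all of this by an indirect argument that you should compare with. Lemma~\ref{basic_lemma} shows $|\Delta((g,1))|\le 1$ with equality exactly on $C_1$, whence $\lim_{\Re s\to\infty}Z^{\V}(f_s^\circ,\xi)=\Vol(C_1)$. Yamana's g.c.d.\ property (\cite[Theorem 5.2, Lemma 6.1]{Yam14}) then makes $\Xi(q^{-s})=Z^{\V}(f_s^\circ,\xi)/\prod_i L^{\V_i}(s+\frac12,\sigma_i\times 1)$ a polynomial in $q^{\pm s}$, hence a polynomial in $q^{-s}$ with constant term $\Vol(C_1)$. The local functional equation \eqref{locFE_zeta}, combined with the explicitly known eigenvalue of $M^*(s,1,A,\psi)$ on $f_s^\circ$ (Proposition~\ref{norm_ex} together with \cite[Proposition 3.5]{Shi99}) and the explicitly known $\gamma$-factor from Proposition~\ref{cal}, yields $\Xi(q^{-s})D(q^{s})=\Xi(q^{s})D(q^{-s})$ with $D(q^{-s})=d^{\V}(s)^{-1}$, and a coprimality argument forces $\Xi(q^{-s})=\Vol(C_1)D(q^{-s})$. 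The only ``explicit computation'' needed is thus the eigenvalue of the intertwining operator on the spherical vector, which is already in the literature. If you want to salvage your direct approach, you would need to supply the exact unfolding identity and the double-coset analysis for $C_1$; as it stands, the proposal restates the target rather than proving it.
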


This proposition is proved at the end of this subsection. We start with a basic lemma:

\begin{lem}\label{basic_lemma}
For $g \in G$,  we have $|\Delta((g,1))| \leq 1$. Moreover, $|\Delta((g,1))| = 1$ if and only if $g \in C_1$.
\end{lem}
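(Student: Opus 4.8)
The claim concerns $|\Delta((g,1))|$ for $g\in G$, where $\Delta = \Delta_{V^\tru}$ is the character of $P(V^\tru)$ extended to a right $K$-invariant function on $G^\Box$ via the decomposition $G^\Box = P(V^\tru)K$ (with $K$ the maximal compact chosen above adapted to the lattice $\GL_{2n}(\cO_D)$ conjugated by $\diag(1,R)$). The first step is to write down, for $g\in G\subset G^\Box$ embedded via $(g,1)$, an explicit Iwasawa-type decomposition $(g,1) = p\cdot k$ with $p\in P(V^\tru)$ and $k\in K$, and read off $|\Delta(p)|$. Concretely, $|\Delta_{V^\tru}((g,1))|$ equals $|N_{V^\tru}(\operatorname{pr}(p))|$ (the reduced norm of the image of the Levi component of $p$ acting on $V^\tru$, measured with the appropriate absolute value), so the task reduces to: given $g\in G$, compute the ``size'' of the projection of $(g,1)$ to the $\GL(V^\tru)$-factor modulo $K$. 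This is governed entirely by how the lattice $L^\tru := V^\tru\cap (\text{the }K\text{-stable lattice})$ compares with its image under $(g,1)$.

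The key step is to identify this quantity lattice-theoretically. Using the basis $e_1,\dots,e_{2n}$ and the explicit $K$ from the excerpt, the $K$-stable lattice in $V^\Box$ is $\Lambda = \bigoplus \cO_D f_i$ where the $f_i$ are the $e_i$ rescaled by the matrix $\diag(1,R)$; intersecting with $V^\tru$ and $V^\trd$ gives lattices $\Lambda^\tru,\Lambda^\trd$. Then $|\Delta((g,1))|$ is the index $[\,\Lambda^\tru : \Lambda^\tru \cap (g,1)\Lambda\,]$-type invariant — more precisely, writing $(g,1)$ in block form with respect to $V^\tru\oplus V^\trd$ and performing the Iwasawa decomposition, $|\Delta((g,1))| = |{\det}\,(\text{lower-left or lower-right block structure})|$ reduces to $|N_V(\tfrac{1}{2}(g+1))|$ up to a unit — one should check this directly from $(g,1) = (g,1)$ acting on $e_j = (v_j,v_j), (v_{j-n},-v_{j-n})$, which mixes $V^\tru$ and $V^\trd$ through $g+1$ and $g-1$. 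The inequality $|\Delta((g,1))|\le 1$ then follows because $(g,1)\Lambda$ and $\Lambda$ are both $g^\Box$-lattices and the relevant projection is norm-nonincreasing: concretely $\tfrac{1}{2}(g+1)$ and $\tfrac{1}{2}(g-1)$ have entries controlled by the lattice, so the determinant of the $V^\tru$-block of the Iwasawa $P$-part lies in $\cO_D$, giving $|\cdot|\le 1$.

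For the equality case, $|\Delta((g,1))| = 1$ should be equivalent to the $V^\tru$-component of the Iwasawa decomposition of $(g,1)$ being a $\cO_D$-lattice automorphism, i.e. $(g,1)$ itself lies in $P(V^\tru)K$ with the $P$-part having Levi component a $\GL(\Lambda^\tru)$-unit; tracing this back through the change of basis by $\diag(1,R)$ and the relation between $e_j$ and $(v_j,\pm v_j)$, the condition becomes precisely that $g$ preserves $\cO_D$-integrality of $V$ after conjugating by $R$, which is exactly the defining condition $g\in C_0$ together with $R(g-1)\in\M_n(\cO_D)$, i.e. $g\in C_1$. The main obstacle I anticipate is the bookkeeping around the two conjugations (the rescaling of $e_j$ by $\tfrac{1}{\sqrt 2}$-type factors is absent here but $\diag(1,R)$ is present, and $2$ is a unit by the odd residual characteristic hypothesis, which is crucial for dropping all powers of $|2|$): one must verify carefully that no stray factor of $|2|$ or $|N(R)|$ survives, so that the estimate is sharp and the equality condition lands exactly on $C_1$ rather than a slightly larger or smaller congruence subgroup. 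Once the Iwasawa decomposition of $(g,1)$ is pinned down explicitly in these coordinates, both assertions of the lemma are immediate.
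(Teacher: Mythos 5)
Your overall strategy (Iwasawa decomposition of $(g,1)$ with respect to $P(V^\tru)K$, read off $|\Delta|$ from the Levi part, interpret it lattice-theoretically) is the same as the paper's, but the proposal has a genuine gap at the decisive step, and one of your intermediate formulas is wrong. First, $|\Delta((g,1))|$ does \emph{not} reduce to $|N_V(\tfrac12(g+1))|$ up to a unit: in the block decomposition of $(g,1)$ with respect to $V^\tru\oplus V^\trd$ all four blocks are $\tfrac12(g\pm1)$, and what the Iwasawa decomposition actually isolates is the $\cO_D$-module generated by the images of \emph{both} bottom blocks, namely $L_1=R(g-1)\cO_D^n$ and $L_2=R(g+1)R^{-1}\cO_D^n$, with $|\Delta((g,1))|^{-1}=|N({}^t\!a^{*-1})|$ where ${}^t\!a^{*-1}\cO_D^n=L_1+L_2$. (Your formula would give $0$ for $g=-1$, whereas $|\Delta((-1,1))|$ is a nonzero number.) Second, and more seriously, your justification of the inequality --- that ``$\tfrac12(g+1)$ and $\tfrac12(g-1)$ have entries controlled by the lattice, so the determinant of the $V^\tru$-block of the $P$-part lies in $\cO_D$'' --- is false for general $g\in G$: an arbitrary isometry $g$ need not lie in $\M_n(\cO_D)$ at all, so neither block is integral, and nothing so far forces $|N(a)|\le 1$.

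The missing idea is the use of the isometry relation: $h(gv,gw)=h(v,w)$ gives $Rg R^{-1}={}^t\!g^{*-1}$, which lets one rewrite $L_1=({}^t\!g^{*-1}-1)R\,\cO_D^n\supset({}^t\!g^{*-1}-1)\cO_D^n$ (using that $R\,\cO_D^n\supset\cO_D^n$, from $\ord_D(\alpha_i)\in\{0,-1\}$) and $L_2=({}^t\!g^{*-1}+1)\cO_D^n$. Since the difference of the two operators is $2$, a unit by the odd residual characteristic hypothesis, one gets $L_1+L_2\supset 2\,\cO_D^n=\cO_D^n$, hence $|N({}^t\!a^{*-1})|\ge1$ and $|\Delta((g,1))|=|N(a)|\le1$. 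Equality then holds iff $L_1+L_2=\cO_D^n$, i.e. iff both $L_1\subset\cO_D^n$ and $L_2\subset\cO_D^n$, which unwinds to $g\in C_1$. Without this step your argument establishes neither the inequality nor the correct equality locus; with it, the rest of your bookkeeping goes through.
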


\begin{proof}
By considering the Iwasawa decomposition in $G^\Box$, we can take $a \in \GL_n(D)$ and $X \in \M_n(D)$ with ${}^t\!X^* = -\epsilon X$ such that 
\begin{align}\label{flem}
\begin{pmatrix} 1 & X \\ 0 &1 \end{pmatrix}^{-1}
\begin{pmatrix} a & 0 \\ 0 & {}^t\!a^{*-1} \end{pmatrix}^{-1}
\begin{pmatrix} 1 & 0 \\ 0 & R \end{pmatrix} 
(g,1) \begin{pmatrix} 1 & 0 \\ 0 & R^{-1} \end{pmatrix} \in \GL_{2n}(\cO_D).
\end{align}
Consider two submodules 
\[
L_1 = R(g-1)\cdot \cO_D^n, \  L_2 = R(g+1)R^{-1} \cdot \cO_D^n 
\]
of the vector space $D^n$, then \eqref{flem} concludes that 
\[
L_1 + L_2 = {}^t\!a^{*-1}\cO_D^n.
\]
On the other hand, considering in ${}^t\!g^{-1} = RgR^{-1}$, we have
\[
L_1 = ({}^t\!g^{-1}-1)R\cdot \cO_D^n \supset ({}^t\!g^{-1}-1)\cdot \cO_D^n, 
\ L_2 = ({}^t\!g^{-1}+1)\cdot\cO_D^n,
\]
and thus $L_1 + L_2 \supset \cO_D^n$. Hence we have
\[
|\Delta((g,1))| = |N(a)| = |N({}^t\!a^{*-1})|^{-1} \leq 1.
\]
Moreover, $|\Delta((g,1))| = 1$ if and only if $L_1 \subset \cO_D^n$ and $L_2 \subset \cO_D^n$, which is equivalent to the condition of the lemma.
\end{proof}

Consider the partition of the integral
\[
Z^\V(f_s^\circ,\xi) = \int_{C_1}\xi(g) \: dg + \int_{G-C_1}f_s^\circ((g,1))\xi(g) \: dg.
\]
If $s_0$ be a sufficiently large real number so that $Z^\V(f_{s_0}^\circ,\xi)$ converges absolutely, then, by Lemma \ref{basic_lemma}, we have
\begin{align*}
\left| \int_{G-C_1}f_s^\circ((g,1))\xi(g) \: dg \right| 
&\leq \int_{G-C_1}|\Delta((g,1))|^{s-s_0}|f_{s_0}^\circ((g,1))\xi(g)|\: dg \\
&\leq q^{-(\Re s-s_0)}\int_G |f_{s_0}^\circ((g,1))\xi(g)|\: dg
\end{align*}
for $\Re s > s_0$. Thus we have
\begin{align}\label{limitation}
\lim_{\Re s \rightarrow \infty}Z^\V(f_s^\circ,\xi) = \Vol(C_1).
\end{align}

Now we prove Proposition \ref{appl2main}. Note that $Z^\V(f_s^\circ, \xi) \not=0$ by \eqref{limitation}. Put $\Xi(q^{-s})$ by 
\[
\frac{Z^\V(f_s^\circ,\xi)}{\prod_{i=0}^rL^{\V_i}(s + \frac{1}{2}, \sigma_i\times1)}.
\]
The ``g.c.d property'' \cite[Theorem 5.2]{Yam14} and \cite[Lemma 6.1]{Yam14} conclude that $\Xi(q^{-s})$ is a polynomial in $q^{-s}$ and $q^s$. Moreover, \eqref{limitation} implies that it is a polynomial in $q^{-s}$ with the constant term $\Vol(C_1)$.
We define the polynomial $D(q^{-s})$ by $d^\V(s)^{-1}$. 
Since the action of the normalized intertwining operator on $f_s^\circ$ is given by
\[
M^*(s,\omega,A,\psi)f_s^\circ = e(G) q^{-n's} \cdot \frac{D(q^{-s})}{D(q^s)}f_{-s}^\circ
\]
(Proposition \ref{norm_ex} and \cite[Proposition 3.5]{Shi99} (or \cite[Theorem 3.1]{Cas80})), 
and since the $\gamma$-factor of $\pi$ is given by 
\[
\gamma^\V(s+\frac{1}{2}, \pi\times1,\psi)
= e(G) q^{-n's} \prod_{i=0}^r\frac{L^{\V_i}(-s + \frac{1}{2}, \sigma_i^\vee\times1)}{L^{\V_i}(s + \frac{1}{2}, \sigma_i\times1)}
\]
where
\[
n' = \begin{cases} 2\lceil \frac{n}{2} \rceil & \mbox{in the hermitian case}, \\
                         2\lfloor \frac{n}{2}  \rfloor & \mbox{in the skew-hermitian case}. \end{cases}
\]
(Proposition \ref{cal}), we can rewrite the functional equation \eqref{locFE_zeta} as
\[
\Xi(q^{-s})D(q^s) = \Xi(q^s) D(q^{-s}).
\]
However, for sufficiently large $m$, $q^{-ms}D(q^{s})$ is coprime to $D(q^{-s})$ as polynomials. Thus, comparing the constant term of $D(q^{-s})$ and $\Xi(q^{-s})$, we have 
\[
\Xi(q^{-s}) = \Vol(C_1) \cdot D(q^{-s}).
\]
Hence we have the proposition.

%%%%%%%%%%%%%%%%%%%%%%%%%%%%%%%%%%  reference   %%%%%%%%%%%%%%%%%%%%%%%%%%%%%%%


\begin{thebibliography}{99}
\bibitem{Art13}  Arthur, J.: The endoscopic classification of representations. Orthogonal and symplectic groups. Amer. Math. Soc. Colloquium Publications, 61. Amer. Math. Soc., Providence, RI, (2013)
\bibitem{BOO96} Branson, T., \'{O}lafsson, G., {\O}rsted, B.: Spectral generating operators and intertwining operators for representations induced from a maximal parabolic subgroup. J. Funct. Anal. 135(1), 163-205 (1996)
\bibitem{Cas80} Casselman, W.: The unramified principal series of $p$-adic groups. I. The spherical function. Compositio Math. 40(3), 387-406 (1980) 
\bibitem{CO78} Casselman, W., Osborne, M.S.: The restriction of admissible representations to $\mathfrak{n}$. Mathematische Annalen, 233(3), 193-198 (1978)
\bibitem{Gan12} Gan, W.T.: Doubling zeta integrals and local factors for metaplectic groups. Nagoya Math. J. 208, 67-95 (2012)
\bibitem{GI14} Gan, W.T., Ichino, A.: Formal degrees and local theta correspondence. Invent. Math. 195(3), 509-672 (2014)
\bibitem{GPSR87} Gelbert, S., Piatetski-Shapiro, I., Rallis, S.: Explicit constructions of automorphic $L$-functions. vol. 1254 of Lecture Notes in Mathematics, Springer-Verlag, Berlin  (1987)
\bibitem{GJ72} Godement, R., Jacquet, H.: Zeta functions of simple algebra. Lecture Notes in Mathematics, Vol. 260, Springer-Verlag, Berlin-New York (1972)
%\bibitem{GK67} Gindikin, S. G., Karpelevi\v{c}, F. I.: An integral associated with Riemannian symmetric spaces of non-positive curvature. Izv. Akad. Nauk SSSR Ser. Mat. 30 1966 1147–1156.
\bibitem{Hen87} Henniart, G. La conjecture de Langlands locale pour $\GL(3)$. M\'{e}m. Soc. Math. France (N.S.), 11-12, 186 (1984)
\bibitem{Igu86} J. Igusa, Some results on p-adic complex powers, American Journal of Mathematics 106. 1013–1032 (1984)
\bibitem{Ike17} Ikeda, T.: On the functional equation of the Siegel series. J. Number Theory 172, 44-62 (2017)
\bibitem{KMSW14} Kaletha, T., Minguez, A., Shin, S W., White, P.: Endoscopic Classification of Representations: Inner Forms of Unitary Groups. arXiv:1409.3731 [math.NT]
\bibitem{Kar79} Karel, M.L.: Functional equations of Whittaker functions on $p$-adic groups. Amer. J. Math. 101(6), 1303-1325 (1979)
\bibitem{Kna94} Knapp, A.W.: Local Langlands correspondence: the Archimedean case. In: Motives (Seattle, WA, 1991), vol. 55 of Proc. Sympos. Pure Math., pp. 393-410. Amer. Math. Soc., Providence, RI (1994) 
\bibitem{Kot83} R. Kottwitz, Sign Changes in Harmonic Analysis on Reductive Groups, Trans. of AMS 278. p. 289-297 (1983)
\bibitem{KT87} Koike, K., and Terada, I.: Young-diagrammatic methods for the representation theory of the groups $\operatorname{Sp}$ and $\operatorname{SO}$. In: The Arcata Conference on Representations of Finite Groups (Arcata, Calif., 1986), vol. 47 of Proc. Sympos. Pure Math., pp. 437-447. Amer. Math. Soc., Providence, RI (1987) 
\bibitem{LR05} Lapid, E.M., and Rallis, S.: On the local factors of representations of classical groups. In: Automorphic representations, $L$-functions and applications: progress and prospects, vol. 11 of Ohio State Univ. Math. Res. Inst. Publ., pp. 309-359. de Gruyter, Berlin  (2005)
\bibitem{Mok15}  Mok, C.P.: Endoscopic classification of representations of quasi-split unitary groups. Mem. Amer. Math. Soc. 235 (2015), no. 1108
\bibitem{PSR86} Piatetski-Shapiro, I., and Rallis, S.: $\epsilon$ factor of representations of classical groups. Proc. Nat. Acad. Sci. U.S.A. 83(13), 4589-4593 (1986)
%\bibitem{Shi97} Shimura, G.: Euler products and Eisenstein series. 
\bibitem{Shi99} Shimura, G.: Some exact formulas on quaternion unitary groups. J. Reine Angew. Math. 509. pp.67-102 (1999)
\bibitem{Sug12} Shin, S.W.: Automorphic Plancherel density theorem. Isr. J. Math, 192(1), pp 83–120 (2012)
\bibitem{Tat77} Tate, J. T.: Local constants. Prepared in collaboration with C. J. Bushnell and M. J. Taylor. Algebraic number fields: L-functions and Galois properties (Proc. Sympos., Univ. Durham, Durham, 1975), pp. 89–131. Academic Press, London (1977)
\bibitem{Wal88} Wallach, N.R.: Lie algebra cohomology and holomorphic continuation of generalized Jacquet integrals. In: Representations of Lie groups, Kyoto, Hiroshima, 1986, vol. 14 of Adv. Stud. Pure Math., pp. 123-151. Academic Press, Boston, MA  (1988) 
\bibitem{Yam11} Yamana, S.: Degenerate principal series representations for quaternionic unitary groups. Israel J. Math. 185. 77-124 (2011) 
\bibitem{Yam13} Yamana, S.: The Siegel-Weil formula for unitary groups. Pacific. J. Math. 264(1), 235-256 (2013)
\bibitem{Yam14} Yamana, S.: $L$-functions and theta correspondence for classical groups. Invent. Math. 196(3), 651-732 (2014)
\bibitem{Yam17} Yamana, S.: Siegel series for skew Hermitian forms over quaternion algebras. Abh. Math. Semin. Univ. Hambg. 87(1), 43-59 (2017)
\end{thebibliography}
\end{document}